\let\oldtextit\textit 
\renewcommand\textit[1]{\oldtextit{\color{RoyalBlue}#1}}
\definecolor{RoyalBlue}{cmyk}{1, 0.50, 0, 0}
\definecolor{Amber}{rgb}{1.0, 0.49, 0.0}
\definecolor{zzwwqq}{rgb}{0.6,0.4,0}
\theoremstyle{definition}
\newtheorem{definition}{Definition}[section]
\newtheorem{proposition}[definition]{Proposition}
\newtheorem{prop}[definition]{Proposition}
\newtheorem{lemma}[definition]{Lemma}
\newtheorem{theorem}[definition]{Theorem}
\newtheorem{example}[definition]{Example}
\newtheorem{conj}[definition]{Conjecture}
\newtheorem{construction}[definition]{Construction}
\theoremstyle{remark}
\newtheorem{remark}[definition]{Remark}
\newcommand{\CC}{\mathbb C}
\newcommand{\RR}{\mathbb R}
\newcommand{\QQ}{\mathbb Q}
\newcommand{\ZZ}{\mathbb Z}
\newcommand{\KK}{\mathbb K}
\newcommand{\cT}{\mathcal T}
\newcommand{\kk}{\mathbf k}
\def\@settitle{\begin{center}%
  \baselineskip13\p@\relax
    \Large
\@title
  \end{center}%
}
\title[Symmetric Tropical Rank 2 Matrices]{The Tropical Variety of Symmetric Rank 2 Matrices}
\author{May Cai}
\address{School of Mathematics, Georgia Tech, Atlanta GA 30332, US}
\email{mcai@gatech.edu}
\author{Kisun Lee}
\address{School of Mathematical and Statistical Science, Clemson University, Clemson, SC, 29634, US}
\email{kisunl@clemson.edu}
\author{Josephine Yu}
\address{School of Mathematics, Georgia Tech, Atlanta GA 30332, US}
\email{jyu@math.gatech.edu}
\date{}
\subjclass[2000]{14T15, 05E14}
\keywords{tropical geometry, simplicial complex, symmetric matrices, matrix completion, matroids}
\begin{document}

\maketitle
\begin{abstract}
    We study the tropicalization of the variety of symmetric rank two matrices.  Analogously to the result of Markwig and Yu for general tropical rank two matrices, we show that it has a simplicial complex structure as the space of symmetric bicolored trees and that this simplicial complex is shellable.  We also discuss some matroid structures arising from this space and present generating functions for the number of symmetric bicolored trees.
\end{abstract}

\section{Introduction}

A tropical matrix is a matrix with entries in the $(\min,+)$ tropical semiring.  We will use $\oplus = \min$ and $\odot = +$ to denote the tropical arithmetic operations.   There are a few different notions of the \textit{rank} of a tropical matrix $A$, including the tropical rank, the Kapranov rank over a valued field, and the Barvinok rank.
For rank $1$, all these notions coincide.  In general they can all be different, but a tropical matrix has tropical rank $2$ if and only if it has Kapranov rank $2$ \cite{DevelinSantosSturmfels}.

Markwig and Yu showed that the space of $d \times n$ matrices of tropical and Kapranov rank $2$ has a simplicial fan structure as the space of \textit{bicolored trees}, and that it is shellable \cite{MarkwigYu}.
Motivated by matrix completion problems, Bernstein described the algebraic matroid for the variety of $d\times n$ rank $2$ matrices using tropical geometry~\cite{bernstein2017completion}.
In this paper, we study analogues of these results for symmetric tropical rank $2$ matrices. 

For symmetric tropical matrices, Cartwright and Chan studied three notions of ranks, namely the symmetric Barvinok rank, the star tree rank, and the tree rank, which depend respectively on how a  symmetric tropical matrix decomposes as a tropical sum of rank one  symmetric tropical matrices, star tree dissimilarity matrices, or tree dissimilarity matrices~\cite{cartwright2012three}.  A  symmetric tropical matrix has finite symmetric Barvinok rank if and only if it is a tropical positive semidefinite matrix~\cite{Yu_PSD}.  
Zwick introduced the symmetric tropical rank and the  symmetric Kapranov rank~\cite{zwick2014variations, zwick2021symmetric} and showed that they coincide for rank $2$ symmetric matrices for the Kapranov rank over an algebraically closed valued field of characteristic $0$.  The symmetric Kapranov rank arises from tropicalizing the variety of symmetric rank $2$ matrices, so it is compatible with the symmetric matrix completion problem as in~\cite{bernstein2020typical,bernstein2021typical}.

Our main results are a combinatorial description of a simplicial fan structure for the space of symmetric tropical rank~2 matrices and a proof that it is shellable.  A shelling is a total ordering of maximal cells in a simplicial complex such that each cell intersects the union of preceding cells in codimension one. Tropical varieties which are known to be shellable include Bergman fans~\cite{ArdilaKlivans}, stable intersections of tropical hypersurfaces with themselves which coincide with skeleta of polytopal fans, tropical Grassmannians $\mathop{Gr}(2,n)$ which coincide with the spaces of phylogenetic trees~\cite{trappmann1998shellability}, and the space of rank $2$ matrices mentioned above.   Shellability implies that the simplicial complex is homotopy equivalent to a wedge of spheres, and in particular has homology only in the top dimension.  Hacking gave a criterion for the link of a tropical variety fan to have homology only in the top dimension and also gave some  examples that do not arise from shellability results~\cite{Hacking}.

In Section~\ref{sec:symbic} we show that symmetric tropical rank~2 matrices are indexed by \textit{sym}metric \textit{bic}olored trees on $2n$ leaves (\textit{symbic trees} for short), and we give two parameterizations of the polyhedral cones using paths in the tree in Section~\ref{sec:Cones}. We show in Section~\ref{sec:Shellability} that the simplicial complex of symbic trees is shellable.  We also gives a generating function for the number of regular symbic trees in Section~\ref{sec:enumeration} and discuss the algebraic matroid structure of the cones in the tropical variety in Section~\ref{sec:Algebraic Matroids}.

\section{Symmetric bicolored trees}
\label{sec:symbic}

A \textit{nonarchimedean valuation} of a field $\KK$ is a map  $\nu: \KK\setminus \{0\} \rightarrow \RR$ satisfying
\[\nu(ab) = \nu(a) \odot \nu(b) := \nu(a) + \nu(b) \text{ and } \nu(a+b) \geq \nu(a) \oplus \nu(b) := \min(\nu(a), \nu(b)) \]
for all $a,b \in \KK \setminus \{0\}$.  For example, the map that sends everything to zero is the trivial nonarchimedean valuation.  Other examples include $\QQ$ or $\CC_p$ with $p$-adic valuations, and the fields of Laurent series or Puiseux series where the valuation sends a formal power series to the smallest exponent appearing with a nonzero coefficient.

For a subvariety $V \subset (\KK^*)^n$ where $\KK$ is an algebraically closed field with nontrivial nonarchimedean valuation, 
we define the \textit{tropicalization} of $V$ as
\[\text{trop}(V)=\overline{\left\{(\nu(x_1),\dots, \nu(x_n)\mid (x_1,\dots, x_n)\in V\right\}}\subset \mathbb{R}^n.\]

When the variety $V=\mathbf{V}(I)$ is defined by some ideal $I \subset \kk[x_1,\dots, x_n]$  over a field $\kk$ with trivial valuation, we can first extend $\kk$ to an algebraically closed field $\KK \supset \kk$ with nontrivial valuation.  Then we extend $I$ to $I' \subseteq \KK[x_1,\dots, x_n]$ and define the variety $V'=\mathbf{V}(I')\subset (\KK^*)^n$. Then, we define $\text{trop}(V)=\text{trop}(V')$.  It follows from the {\em Fundamental Theorem} of tropical algebraic geometry that this definition does not depend on the choice of the extension $\KK$.
The {\em Structure Theorem} says that if $V$ is a irreducible variety of dimension $d$, then $\text{trop}(V)$ is the support of a balanced polyhedral complex of pure dimension $d$ which is connected through codimension one.  See Chapters 2 and 3 of \cite{maclagan2021introduction}.

For a polynomial over a field with a nonarchimedean valuation, its \textit{tropicalization} is obtained by replacing usual sums and products with tropical sums and products, while replacing the coefficients with their valuations.
For example, consider the determinant of a $3 \times 3$ symmetric matrix of variables
$\begin{bmatrix}x_{11}&x_{12}&x_{13}\\x_{12}&x_{22}&x_{23}\\x_{13}&x_{23}&x_{33}\end{bmatrix}$, which is $x_{11}x_{22}x_{33} + 2 x_{12}x_{13}x_{23} - x_{11}x_{23}^2 - x_{22} x_{13}^2 - x_{33}x_{12}^2$.
The tropicalized $3\times 3$ minor is the tropical polynomial \[ (x_{11}\odot x_{22} \odot x_{33}) \oplus (x_{12} \odot x_{23} \odot x_{13}) \oplus (x_{11} \odot x_{23} \odot x_{23}) \oplus (x_{22}  \odot x_{13} \odot x_{13}) \oplus (x_{33}\odot  x_{12} \odot x_{12}).\]

A symmetric $n \times n$ matrix has \textit{symmetric tropical rank} $r$ if the minimum is attained twice in all $(r+1)\times (r+1)$ minors where the minors are considered as polynomials in ${n+1 \choose 2}$ variables $x_{ij}$ for $1 \leq i \leq j \leq n$. 
For example, for the matrix $\begin{bmatrix}1&0&0\\0&1&0\\0&0&1\end{bmatrix}$ the minimum in the tropical determinant is attained twice, but the minimum is attained uniquely at the monomial $x_{12} \odot x_{23} \odot x_{13}$ in the symmetric tropical determinant.  
Thus $I$ has symmetric tropical rank $3$, although it has usual tropical rank $2$.  Zwick showed that a symmetric matrix has symmetric tropical rank $2$ if and only if it is in the tropicalization of the  variety of symmetric rank $2$ matrices~\cite{zwick2014variations, zwick2021symmetric}.  In particular, a symmetric matrix over rational numbers has symmetric tropical rank~$\leq 2$ if and only if it is the valuation of a symmetric matrix of rank~$\leq 2$ over the complex Puiseux series.

We will now recall the construction in~\cite{MarkwigYu, Develin_moduli} that associates a bicolored tree to a tropical rank $2$ matrix. 

\begin{construction}\label{con:tree-from-matrix}
Given a $r \times n$ tropical matrix $A$ of tropical rank $2$, we can construct a metric tree $T_A$ on
$2n$ leaves with labels $1,2,\dots,n, 1',2',\dots, r'$ as follows. 
The tropical convex hull of the columns of $A$, defined as all tropical linear combinations of the columns of $A$, is a polyhedral complex of dimension one modulo tropical scaling, that is, it is a union of line segments in $\RR^r/\RR(1,\dots,1)$.  The tropical convex hull is contractible, so it can be seen as a metric tree where the nodes are the points in the tropical convex hull  whose neighborhood is not an open line segment, and two nodes are connected by an edge if there is a straight line segment connecting them in the tropical convex hull.  The length of an edge is the length of the line segment under the tropical Hilbert metric
\[
d(x,y) = \max_{i} (x_i-y_i) - \min_{i} (x_i-y_i).
\]
For a tropical line to be balanced, the lattice direction of edges emanating from every vertex should sum up to a vector in $\RR(1,\dots,1)$. The tropical convex hull can be made balanced by attaching one infinite ray in each of the coordinate directions $\mathbf{e}_1, \dots, \mathbf{e}_n$ in a unique way, making it into a tropical line (see \cite[Figure 1]{Develin_moduli}).  
The desired tree $T_A$ is obtained by starting with the tropical convex hull as a metric tree, and attaching a leaf labeled $i$ to the location of the $i$-th column vector and a leaf labeled $j'$ to where the infinite ray $\mathbf{e}_j$ is attached in the tropical convex hull. 

The edges of the metric tree obtained from the tropical convex hull are called \textit{internal edges}. The remaining edges are called \textit{leaf edges} and are considered to have infinite length.
The two types of leaves are said to have different ``colors.''
\end{construction}

Tropically scaling a column of $A$, i.e.\ adding the same real number to all the entries in a column, does not change the tropical convex hull of $A$, so that does not change the tree $T_A$.  Tropically scaling a row of $A$ translates the tropical convex hull of $A$ in a coordinate direction and also does not change the tree $T_A$.

A \textit{split} of a tree is a partition of the leaf labels given by removing an internal non-leaf edge.
It was shown in \cite{MarkwigYu} that every split of such a tree constructed above has both colors on both sides.   We will call such trees \textit{bicolored} trees. Conversely, every bicolored metric tree uniquely determines a matrix of tropical rank $2$ modulo tropically scaling rows and columns.

\begin{example}
\label{ex:identity}
The bicolored trees corresponding to the following two matrices are depicted in Figure~\ref{fig:3x3}:
\[\begin{bmatrix}1&0&0\\ 0&1&0 \\ 0&0&1 \end{bmatrix} \text{ and }
 \begin{bmatrix}1&0&0\\ 0&0&1 \\ 0&1&0 \end{bmatrix}.\]
 The classical identity matrix, on the left, has symmetric tropical rank $3$.  The fixed points under the color changing involution form a subtree that is not a path.  The matrix on the right has symmetric tropical rank $2$.  The fixed point set consists of just one edge.
\end{example}

\begin{figure}
\centering
\begin{tikzpicture}
\draw[line width=0.2mm,->] (0.5,0.5) -- (3.5,0.5) node[anchor=west]{$\mathbf{e}_2$};
\draw[line width=0.2mm,->] (0.5,0.5) -- (0.5,3.5) node[anchor=south]{$\mathbf{e}_3$};
\draw[line width=0.2mm,->] (0.5,0.5) -- (-1.5,-1.5) node[anchor=north]{$-\mathbf{e}_2-\mathbf{e}_3$};

\draw[line width=3pt] (0.5,0.5) -- (2.5,0.5);
\draw[line width=3pt] (0.5,0.5) -- (.5,2.5);
\draw[line width=3pt] (0.5,0.5) -- (-1,-1);

\draw[blue,line width=1.5pt] (-1,-1) -- (-1.7,-.7) node[blue,anchor=east]{$1'$};
\draw[red,line width=1.5pt] (-1,-1) -- (-.65,-1.65) node[red,anchor=north west]{$1$};

\draw[blue,line width=1.5pt] (2.5,.5) -- (3.2,1.2) node[blue,anchor=west]{$2'$};
\draw[red,line width=1.5pt] (2.5,.5) -- (3.2,-.2) node[red,anchor=west]{$2$};

\draw[blue,line width=1.5pt] (.5,2.5) -- (-.2,3.2) node[blue,anchor=east]{$3'$};
\draw[red,line width=1.5pt] (.5,2.5) -- (1.2,3.2) node[red,anchor=west]{$3$};

\draw[line width=0.2mm,->] (7.5,0.5) -- (10.5,0.5) node[anchor=west]{$\mathbf{e}_2$};
\draw[line width=0.2mm,->] (7.5,0.5) -- (7.5,3.5) node[anchor=south]{$\mathbf{e}_3$};
\draw[line width=0.2mm,->] (7.5,0.5) -- (5.5,-1.5) node[anchor=north]{$-\mathbf{e}_2-\mathbf{e}_3$};

\draw[line width=1.5pt] (7.5,0.5) -- (9.5,0.5);
\draw[line width=1.5pt] (7.5,0.5) -- (7.5,2.5);
\draw[line width=3pt] (7.5,0.5) -- (6,-1);

\draw[blue,line width=1.5pt] (6,-1) -- (5.3,-.7) node[blue,anchor=east]{$1'$};
\draw[red,line width=1.5pt] (6,-1) -- (6.35,-1.65) node[red,anchor=north west]{$1$};

\draw[blue,line width=1.5pt] (9.5,.5) -- (10.2,1.2) node[blue,anchor=west]{$2'$};
\draw[red,line width=1.5pt] (9.5,.5) -- (10.2,-.2) node[red,anchor=west]{$3$};

\draw[blue,line width=1.5pt] (7.5,2.5) -- (6.8,3.2) node[blue,anchor=east]{$3'$};
\draw[red,line width=1.5pt] (7.5,2.5) -- (8.2,3.2) node[red,anchor=west]{$2$};

\end{tikzpicture} 
   \caption{The embedding of the matrices in Example~\ref{ex:identity}. The left is the embedding of the $3 \times 3$ identity matrix. All the internal edges are fixed points of the involution from Definition~\ref{def:symbic} on the left tree, while on the right the single fixed point internal edge is thicker.}
\label{fig:3x3}
\end{figure}
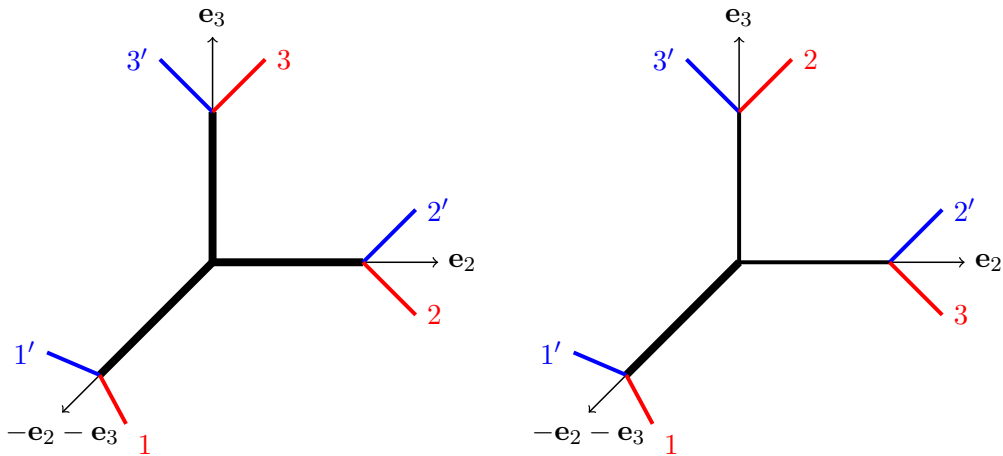

\begin{definition}\label{def:symbic}
   A bicolored tree is called a \textit{symmetric bicolored tree}  (\textit{symbic tree} for short) if it is a metric tree on leaves labeled $1,2,\dots,n, 1',2',\dots,n'$ such that 
   \begin{enumerate}
       \item each split contains both colors on both sides
       \item the internal edges have positive lengths
       \item the metric tree is symmetric with respect to the involution swapping  $i$ and $i'$ for all $i =1,2,\dots,n$
       \item the fixed points of the involution form a path.
    \end{enumerate}
    We do not consider the lengths of the leaf edges.  A \textit{combinatorial symbic tree} is a symbic tree without the metric data.  We will often drop the word combinatorial if it is clear from context.
    We will often use the shorthand ``$n + n$ symbic tree'' for a symbic tree on $2n$ leaves with $n$ leaves of each color, as they are associated by the following theorem with $n \times n$ matrices. 
\end{definition}

\begin{theorem}
\label{thm:symbic}
Let $A$ be a symmetric tropical matrix of symmetric tropical rank $2$.  Then the associated bicolored tree $T_A$ is a symbic tree.  Conversely, every symbic tree arises from a symmetric tropical matrix of symmetric tropical rank $2$, and the matrix is unique up to simultaneous tropical scaling of rows and columns. 
\end{theorem}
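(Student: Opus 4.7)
My plan is to prove both directions by building on the Markwig--Yu correspondence between tropical rank~$2$ matrices and bicolored trees, together with Zwick's identification of symmetric tropical rank~$2$ with the tropicalization of the symmetric rank~$2$ variety.

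For the forward direction, since symmetric tropical rank~$2$ implies ordinary tropical rank~$2$, Construction~\ref{con:tree-from-matrix} produces a bicolored metric tree $T_A$, which yields conditions~(1) and~(2) of Definition~\ref{def:symbic} directly from \cite{MarkwigYu}. For condition~(3), I would first establish the auxiliary fact that for any tropical rank~$2$ matrix $B$, the tree $T_{B^T}$ is obtained from $T_B$ by swapping the two leaf colors; this follows because the Markwig--Yu tree is determined, up to simultaneous tropical scaling of rows and columns, by the pairwise distances $d_{T_B}(i,j') = B_{ij}$ between oppositely colored leaves, and transposing exchanges $B_{ij}$ with $B_{ji} = d_{T_B}(j,i')$. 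Applying this lemma to $A = A^T$ yields the color-swapping involution $\sigma$ on $T_A$, proving condition~(3).

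For condition~(4), I would use Zwick's theorem to lift $A$ to a symmetric rank~$2$ matrix $\tilde{A}$ over an algebraically closed non-archimedean valued field of characteristic zero, and diagonalize to write $\tilde{A} = u u^T + v v^T$. The columns of $\tilde{A}$ then lie on the classical line $\operatorname{span}(u,v)$, so their valuations lie on the tropical line in $\TP^{n-1}$ spanned by $\nu(u)$ and $\nu(v)$, which is itself a tree with $n$ rays in the coordinate directions. The bicolored tree $T_A$ embeds into this ambient tropical line, and I would identify the fixed locus of $\sigma$ with the bounded tropical segment between $\nu(u)$ and $\nu(v)$ on this line; this segment is a path, establishing~(4).

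For the converse direction, given a symbic tree $T$, Markwig--Yu produces a tropical rank~$2$ matrix $A_T$, unique up to simultaneous tropical scaling of rows and columns, with $T_{A_T} = T$. The $\sigma$-invariance of $T$ together with the transpose-swaps-colors lemma shows that $T_{A_T^T} = T_{A_T}$, whence $A_T^T$ and $A_T$ agree up to scaling; a scaling can then be chosen so that $A_T$ is literally symmetric. To upgrade to symmetric tropical rank~$2$, I would use the fixed-path condition to pick the endpoints of the path as $\nu(u)$ and $\nu(v)$ and construct a symmetric rank~$2$ lift $\tilde{A}_T = u u^T + v v^T$ that valuates to $A_T$.

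I expect the main obstacle to be condition~(4) in both directions: showing that the fixed locus of $\sigma$ is precisely the tropical segment between $\nu(u)$ and $\nu(v)$, and conversely that any path of fixed points arises this way. The forward direction requires understanding how the two rank-one summands $u u^T$ and $v v^T$ together determine both the ambient tropical line and the column positions within $T_A$, and why the symmetry of these summands realizes $\sigma$ as the swap between column ends and ray ends along this line. The converse requires recovering the two lift vectors $u, v$ from the endpoints of the fixed path while respecting the off-path structure of the tree, where the non-fixed branches have to be consistently paired up by $\sigma$.
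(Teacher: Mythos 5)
Your treatment of conditions (1)--(3) matches the paper: the transpose-swaps-colors lemma (Lemma~\ref{lem:transpose}) applied to $A = A^\top$ is exactly how the paper obtains the color-swapping involution. The divergence, and the gap, is in condition (4) and in the rank verification for the converse, both of which you route through a lift $\tilde A = uu^\top + vv^\top$ and an unproven identification of the fixed locus of the involution with the tropical segment between $\nu(u)$ and $\nu(v)$. That identification cannot be taken for granted: the decomposition $uu^\top + vv^\top$ is far from unique (any ``orthogonal'' change of basis of the pair $(u,v)$ gives another one), so $\nu(u)$ and $\nu(v)$ are not canonically attached to $A$, whereas the fixed locus is; moreover the tropicalization of the plane $\operatorname{span}(u,v)$ is governed by the valuations of its $2\times 2$ minors, which can exceed the tropical products of $\nu(u)$ and $\nu(v)$ by cancellation, so it is not ``the tropical line spanned by $\nu(u)$ and $\nu(v)$.'' You flag this as the main obstacle, but it is the entire content of the theorem beyond Markwig--Yu, and nothing in the proposal closes it. The same problem recurs in the converse, where ``construct a symmetric rank~$2$ lift $\tilde A_T = uu^\top + vv^\top$ that valuates to $A_T$'' is asserted without a construction; controlling cancellation in $u_iu_j + v_iv_j$ across all entries is precisely the hard part.

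The paper avoids lifts altogether. For condition (4) it argues by contradiction: if three fixed edges met at a vertex, then choosing pairs $(i,i')$, $(j,j')$, $(k,k')$, one beyond each branch, and restricting to the principal $3\times 3$ submatrix (Lemma~\ref{lem:submatrices}) produces, up to tropical scaling, the matrix \eqref{eqn:3x3matrix} with $a,b,c>0$; its tropical determinant attains its minimum only at the two permutations corresponding to the single monomial $x_{12}\odot x_{13}\odot x_{23}$ of the \emph{symmetric} determinant, so the symmetric tropical rank exceeds $2$. The converse rank check is the mirror image: a symmetric matrix of tropical rank $2$ can fail to have symmetric tropical rank $2$ only through such a principal submatrix, and the path condition excludes it. If you want to salvage your approach you would need to fix a canonical decomposition and actually prove the segment identification; the $3\times 3$ principal minor argument is both shorter and already available from Lemma~\ref{lem:submatrices}.
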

Simultaneous tropical scaling means adding a fixed real number $c$ to all entries in the $i$-th row and all entries in the $i$-column, hence adding $2c$ to the $(i,i)$-entry.  This does not change the associated symbic tree.

The next two lemmas will be used in the proof of Theorem~\ref{thm:symbic}.

\begin{figure}
\centering
\input{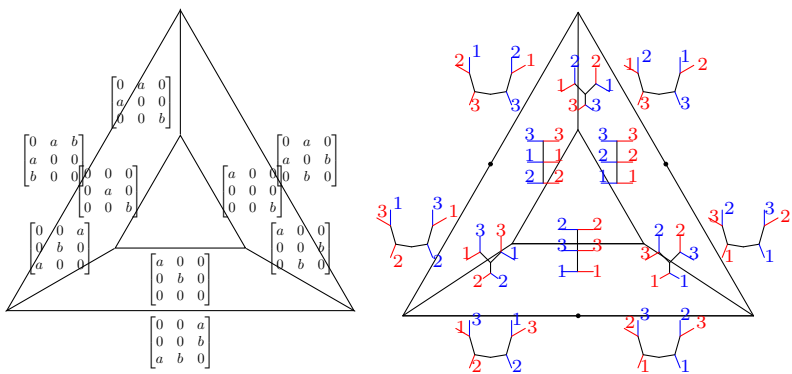}
    \caption{The space of $3 \times 3$ symmetric tropical rank $2$ matrices, where $a, b \ge 0$.
    There are 9 top dimensional cells in the tropical hypersurface of the $3 \times 3$ symmetric determinant.  The symbic trees give a finer polyhedral structure with $12$ top dimensional cells.
    }
\label{fig:3x3fans}
\end{figure}



\begin{lemma}
\label{lem:transpose}
Let $A$ be an $n \times n$ matrix of tropical rank $2$ and $A^\top$ be its transpose.  Then the bicolored tree $T_{A^\top}$ is obtained from $T_A$  by swapping $i$ and $i'$ for every $i=1,\dots,n$.  In particular, if $A$ is symmetric, then $T_A$ is symmetric with respect to swapping the colors of every leaf.
\end{lemma}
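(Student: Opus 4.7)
The plan is to exploit the characterization of $T_A$ by its tree metric on bichromatic leaf pairs. From Construction~\ref{con:tree-from-matrix}, after normalizing $A$ within its equivalence class modulo tropical row and column scaling, the tree distance $d_{T_A}(i, j')$ between the column-leaf $i$ and the coordinate-ray leaf $j'$ equals the entry $A_{ij}$; this can be read off by tracking the tropical Hilbert metric along the path from the column point $\mathbf{a}_i$ to the base of the $\mathbf{e}_j$-ray inside the tropical convex hull. The standard degrees of freedom in this identification (an additive row shift $r_i$ and column shift $c_j$) are exactly the row and column tropical scalings by which the matrix is only defined up to.

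Granting this identification, the proof becomes a relabeling argument. Let $T'$ denote the labeled metric tree obtained from $T_A$ by swapping $i$ and $i'$ for every $i$; the underlying unlabeled metric tree is unchanged. By symmetry of tree distance,
\[
d_{T'}(i, j') \;=\; d_{T_A}(i', j) \;=\; d_{T_A}(j, i') \;=\; A_{ji} \;=\; (A^\top)_{ij}
\]
for every $i, j$, so $T'$ and $T_{A^\top}$ carry the same bichromatic tree metric. Because every internal edge of a bicolored tree has both colors on both sides, every split is witnessed by a bichromatic four-point subset, so a bicolored tree is determined by its bichromatic tree metric through the standard four-point-condition reconstruction. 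Hence $T' = T_{A^\top}$, which is precisely the first claim.

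The second sentence is immediate: if $A = A^\top$, then $T_{A^\top} = T_A$, and combining with the first part shows $T_A$ coincides with its color-swapped relabeling, i.e.\ is invariant under the involution $i \leftrightarrow i'$.

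The main technical point is to set up the bichromatic distance identification cleanly and to ensure that the row- and column-shift corrections for $A$ and for $A^\top$ match up under the color relabeling. This is forced by the fact that transposing $A$ interchanges tropical row scaling with tropical column scaling, which is exactly mirrored by the color swap on the tree, so the same equivalence class on the matrix side corresponds to the same unlabeled metric tree on the geometric side.
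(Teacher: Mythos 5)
Your proof is correct in structure and lands on the same underlying mechanism as the paper's, but it routes through a different normalization and a different uniqueness argument. The paper reads the matrix off the tree via the Gromov product at a base point $O$: it sets $A_{ij}$ equal to the distance from $O$ to where the paths $O\to i$ and $O\to j'$ diverge, verifies directly that Construction~\ref{con:tree-from-matrix} recovers $T$ from this matrix (a balancing computation of the coordinates of the column points), and then observes that this formula is manifestly transposed under the color swap. You instead work with the bichromatic path-length matrix $d_{T_A}(i,j')$ and appeal to reconstruction of a bicolored tree from its bichromatic distances via quartets; that reconstruction does work here, precisely because every split has both colors on both sides, though with only the four bichromatic distances of a quartet available one compares $d(i,k')+d(j,l')$ against $d(i,l')+d(j,k')$ rather than running the full three-way four-point condition. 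The one inaccuracy is your identification $d_{T_A}(i,j')=A_{ij}$ modulo row and column shifts: the correct relation is $d(i,j')=D_i+D_j-2A_{ij}$ where $D_i$ is the distance from $O$ to leaf $i$ (this is exactly the paper's identity $2A_T+B_T=D\odot D^\top$ in Proposition~\ref{prop:generators}), so the distance matrix is $-2A$ plus row and column shifts, not $A$ plus shifts; a factor of $-2$ is not absorbed by tropical row and column scaling. This does not break your argument, since all you need is that the bichromatic distance matrix is some fixed affine function of $A$ that commutes with transposition and respects the scaling equivalence, and $D_i+D_j-2A_{ij}$ has that property; but the displayed chain of equalities should be corrected accordingly, or you should switch to the Gromov-product normalization as the paper does, for which the identity $A_{ij}=(i\cdot j')_O$ holds on the nose.
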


\begin{proof}
Consider a bicolored tree $T$. It corresponds to a tropical rank 2 matrix, which is unique up to tropical scaling of rows and columns by~\cite{MarkwigYu}. We can construct one such matrix $A$ by assigning a point $O$ of the tree $T$ to the origin.  This determines the coordinates of the columns vectors of the matrix up to tropical scaling since the leaf labels of the tree encodes the directions of the edges. 
In particular, let $A$ be the matrix whose entry $A_{ij}$ is the length of path from $O$ to the point where the path from $O$ to $i$ and the path from $O$ to $j'$ diverge.  See Example~\ref{ex:4x4matrix} for a concrete example.  

We check that the Construction~\ref{con:tree-from-matrix} takes the the matrix $A$ back to $T$. By construction, a leaf $j'$ corresponds to an infinite edge in the $\mathbf{e}_j$ direction attached to the tropical line segment at that point. Consider the path from $O$ to $i$. Each edge along this path induces a split of some leaves $\{j_1', \dots, j_s'\}$ from $\{j_{s+1}', \ldots, j_n'\}$. Because this tree is balanced, this edge has direction $\mathbf{e}_{j_1} + \mathbf{e}_{j_2} + \cdots + \mathbf{e}_{j_s} = -\mathbf{e}_{j_{s+1}} - \mathbf{e}_{j_{s+2}} - \cdots - \mathbf{e}_{j_n}$. Without loss of generality, let $i$ be on the $j_1'$ side, then traversing this edge towards $i$ amounts to the usual addition of the vector $\ell(\mathbf{e}_{j_1} + \cdots +\mathbf{e}_{j_s})$, where $\ell$ is the length of that edge in $T_A$. Following this path, we have that the $j$-th coordinate of the point $i$ is exactly the length of the path from $O$ to the point where the path from $O$ to $i$ and the path from $O$ to $j'$ diverge. It follows that the tropical convex hull of the columns (or rows) of $A$ is the bicolored tree $T$ we started with, with the chosen point $O$ at the origin. 

By the description of the matrix $A$ above, changing the colors correspond to transposing the matrix.  
\end{proof}

\begin{lemma}\label{lem:submatrices}
In Construction~\ref{con:tree-from-matrix} taking submatrices corresponds to taking induced subtrees.
\end{lemma}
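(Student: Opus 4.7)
My plan is to reduce the statement to removing a single row or column (an arbitrary submatrix is obtained by iterating such deletions), and then invoke the uniqueness half of the tree-to-matrix correspondence established in the proof of Lemma~\ref{lem:transpose}.

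By Lemma~\ref{lem:transpose}, transposing $A$ corresponds to swapping the colors of every leaf of $T_A$. Hence removing a row from $A$ can be realized by transposing, removing a column, and transposing back, so it suffices to treat column deletion. Let $B$ be the submatrix of $A$ obtained by deleting column $n$, and let $T'$ denote the subtree of $T_A$ induced on the leaves $\{1,\ldots,n-1,1',\ldots,n'\}$, with degree-$2$ internal vertices contracted and their two incident internal edge lengths summed. I will show $T_B = T'$; iterating one column (and one row) at a time then gives the general statement.

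The central tool is the formula derived inside the proof of Lemma~\ref{lem:transpose}: after fixing an origin $O$ on the tree, the associated matrix entry is
\[
A_{ij} = \text{length in } T_A \text{ from } O \text{ to the divergence point of the paths from } O \text{ to } i \text{ and to } j',
\]
and such a matrix is unique up to simultaneous tropical scaling of rows and columns. Choose $O$ lying in $T'$. For any $i,j\in\{1,\ldots,n-1\}$, the paths in $T_A$ from $O$ to the leaf $i$ and to the leaf $j'$ stay entirely inside $T'$, since geodesics in a tree are unique and all three endpoints $O,i,j'$ lie in $T'$. Consequently the divergence point and its distance from $O$ are the same whether computed in $T_A$ or in $T'$. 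Thus $B_{ij}=A_{ij}$ is also the value produced by the reconstruction formula applied to $T'$, so by the uniqueness part of Lemma~\ref{lem:transpose} (cf.~\cite{MarkwigYu}), we obtain $T_B = T'$.

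The main piece of bookkeeping to double-check is that the outcome $T'$ of the contraction is a legitimate bicolored tree to which Construction~\ref{con:tree-from-matrix} applies—that is, that after deleting leaf $n$ and contracting degree-$2$ vertices, each remaining split still has both colors on both sides (at least when $B$ itself has tropical rank $2$; degenerate cases where $B$ drops rank can be handled separately, as both $T_B$ and $T'$ then collapse accordingly). With this combinatorial check in hand, the uniqueness argument identifies $T_B$ with $T'$ cleanly, without any direct analysis of how the ray attachment points for $\mathbf{e}_1,\ldots,\mathbf{e}_n$ reorganize themselves in the smaller tropical convex hull, which would otherwise be the most delicate step.
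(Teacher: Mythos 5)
Your proof is correct in substance but takes a genuinely different route from the paper's. The paper works directly with the tropical convex hull: deleting rows outside the submatrix is a coordinate projection of $\text{tconv}(M)$, deleting columns is passing to a tropical subpolytope, and since the convex hull is one-dimensional both operations visibly produce a subtree, to which the leaves are then re-attached. You instead bypass the convex-hull geometry entirely and argue through the explicit parameterization $A_{ij} = d(O,\,\text{divergence point of the paths to } i \text{ and } j')$ from the proof of Lemma~\ref{lem:transpose}: since $O$, $i$, $j'$ all lie in the induced subtree $T'$ and tree geodesics are unique, the submatrix $B$ is exactly the distance matrix of $T'$, and uniqueness forces $T_B = T'$. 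Your route buys a cleaner reduction to a single deletion at a time and avoids reasoning about how the rays $\mathbf{e}_j$ re-attach; the paper's route buys a more conceptual one-line explanation of why a subtree appears at all and handles all deletions simultaneously. (One small slip: after deleting only column $n$ the row index $j$ should still range over $\{1,\dots,n\}$, not $\{1,\dots,n-1\}$; this does not affect the argument since all primed leaves remain in $T'$.)

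The caveat you flag at the end is real and is not merely bookkeeping: the induced subtree can fail to be bicolored, even for principal submatrices of symmetric matrices. If a branch of $T_A$ carries leaves $\{n, j_1', j_2'\}$, then deleting leaf $n$ leaves a unicolored split $\{j_1',j_2'\}$ in the naive induced subtree, whereas in $T_B$ that whole branch collapses and $j_1', j_2'$ attach at its base --- this is precisely the ``brittle twig'' phenomenon of Figure~\ref{fig:brittlebranchexample}(d). So the identification $T_B = T'$ as you state it requires either verifying bicoloredness of $T'$ (as you propose) or adopting the convention that unicolored pieces of $T'$ are contracted. The paper's proof glosses over the same point, acknowledging it only for non-principal submatrices. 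In the one place the lemma is actually invoked (the proof of Theorem~\ref{thm:symbic}, where $i,j,k$ are chosen one per fixed branch), the induced subtree is bicolored, so both your argument and the paper's go through there.
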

\begin{proof}
Consider a symmetric matrix $M$ of symmetric tropical rank $2$. Consider a tropical convex hull $P=\text{tconv}(M)$ and an $r\times r$-principal submatrix $M'$. Then, deleting rows outside $M'$ corresponds to a projection of $P$ into $\mathbb{R}^r/\mathbb{R}\mathbf{1}$ and deleting columns outside $M'$ corresponds to passing through subpolytope $P'$ of the image. Note that since $M$ has tropical rank $2$, its tropical convex hull is $1$-dimensional, i.e.\ a tree. Therefore, deleting rows and columns outside $M'$ can be understood as taking subtree of $\text{tconv}(M)$. As a bicolored tree of a symmetric matrix is constructed from a tropical convex hull of the matrix by attaching leaves, a corresponding bicolored tree of a principal submatrix is an induced subtree.

If we take a non-principal submatrix, we still achieve a tree with two-colored leaves, but the resulting tree will not be symmetric, nor necessarily satisfy the bicolored splits condition.
\end{proof}

\begin{proof}[Proof of Theorem~\ref{thm:symbic}]
Let $A$ be a symmetric tropical matrix of  symmetric tropical rank $2$, and let $T_A$ be the associated bicolored tree attained via Construction~\ref{con:tree-from-matrix}.
Then it already satisfies the first two conditions for being a symbic tree.  The third condition on symmetry with respect to swapping colors follows from Lemma~\ref{lem:transpose}.  For the fourth condition, suppose for sake of contradiction that there are three fixed edges meeting at a point.  Then the leaves beyond each of those branches only get swapped with each other, for the branch to remain fixed.  Choose pairs of leaves $(i,i')$, $(j,j')$, and $(k,k')$, one beyond each of the three branches.  Consider the $3 \times 3$ principal submatrix indexed by $i,j,k$. From Lemma~\ref{lem:submatrices}, as the corresponding subtree looks like the left figure in Figure~\ref{fig:3x3}, it is the bicolored tree of a matrix of the form \begin{equation}
\label{eqn:3x3matrix}
{\begin{pmatrix}
    a & 0 & 0 \\ 0 & b & 0 \\ 0 & 0 & c
\end{pmatrix}} \text{ where } a>0,b>0,c> 0.
\end{equation}   The matrix representing the tree is unique up to tropical scaling of rows and columns, and for each of them the minimum in the tropical determinant is attained at the two permutations corresponding to the same monomial in the symmetric $3\times 3$ determinant.  Thus
this $3 \times 3$ principal minor has symmetric tropical rank $>2$, contradicting the assumption that the original matrix has symmetric tropical rank $2$.  Thus the subset of fixed points, viewed as a tree, cannot contain a branch point with three edges, so it must be a path.

Conversely, given a symbic tree $T$, we can construct tropical rank $2$ matrix $A$ associated with it as in the proof of Lemma~\ref{lem:transpose}, choosing the point $O$ to be one of the fixed points under the color-swapping involution.  Then the matrix $A$ is symmetric by construction because the symbic tree is symmetric under the color changing involution and $O$ is fixed by the involution.
The uniqueness up to rescaling rows and columns  follows from the uniqueness in the general case~\cite{MarkwigYu}.

Since $A$ is a matrix associated to a bicolored tree, it must be tropical rank 2,  and since the fixed points of the tree under the color-swapping involution form a path, it does not have a principal submatrix of the form \eqref{eqn:3x3matrix}.  As this matrix is symmetric, it has symmetric tropical rank 2.
\end{proof}

\section{Simplicial fan structure}    
\label{sec:Cones}

In this section we will describe the set of symmetric tropical rank two matrices as a simplicial polyhedral fan.  The \textit{lineality space} of a polyhedral cone $C$ is the linear space consisting of all vectors $v$ such that $C+v = C$.  The lineality space of a polyhedral fan is the intersection of the linear spaces of the cones in the fan.

 \begin{lemma}
 \label{lem:lineality}
 The set of $n \times n$ symmetric tropical rank one matrices is the (ordinary) linear space      \(
\{X \odot X^\top \mid X \in \RR^n \}.
     \)
     It is contained in 
     the lineality space of the set of $n \times n$ matrices of symmetric tropical rank at most $r$ for any $r \geq 1$.
 \end{lemma}

 \begin{proof}
      From the definition with $2 \times 2$ minors, a symmetric matrix having \emph{symmetric tropical rank one} is equivalent to it having \emph{tropical rank one}.  Tropical rank one matrices have  the form $X \odot Y^\top$ where $X$ and $Y$ are column vectors. Symmetric matrices of tropical rank one can be transformed to the zero matrix by simultaneously tropically scaling rows and columns, so they are of the form $X \odot X^\top$.  These simultaneous tropical scaling operations do not change the symmetric tropical rank, so we have the result.
 \end{proof}

  As seen in the proof of Theorem~\ref{thm:symbic}, each symbic tree $T$ arises from a symmetric tropical rank two matrix $A_T$ whose $(i,j)$ entry is the distance from a chosen fixed vertex $O$ to the vertex where the path from $O$ to leaf $i$ and  the path from $O$ to leaf $j'$ diverge.   

We will now give another way to construct tropical matrices from symbic trees using distances between leaves in the tree.
Let $B_T$ be a matrix whose $(i,j)$ entry is the total length of internal edges along the path from $i$ to $j'$ in the symbic tree $T$.  See Example~\ref{ex:4x4matrix}.  

\begin{proposition}
\label{prop:generators}
For a combinatorial symbic tree $\cT$, let $C_\cT$ be the set of all symmetric tropical rank $2$ matrices with a symbic tree of type $\cT$. It is a polyhedral cone with the following description:
\begin{align*}
    C_\cT &= \{A_T \mid T \text{ is a symbic tree of type } \cT \} + L \\
    C_\cT &= \{-B_T \mid  T \text{ is a symbic tree of type } \cT \} + L
\end{align*}
where $A_T$ and $B_T$ are as described in the paragraph above, and
 $L = \{X \odot X^\top \mid X \in \RR^n\}$ is the linear space in the previous lemma.   
\end{proposition}
 
Modulo the linear space $L$, the dimension of the cone $C_\cT$ is the number of internal edges in $\cT$ up to the color swapping involution. 

\begin{proof}  The description of $C_\cT$
 in terms of $A_T$ follows from the proof of Theorem~\ref{thm:symbic}.  For the second description, consider the matrix $2A_T + B_T$.  Its $(i,j)$ entry is the sum of the distance from $i$ to $O$ and the distance from $j$ to $O$.  Hence $2A_T + B_T = D \odot D^\top$ where $D$ is the column vector whose $i$-th entry is the distance from $i$ to $O$.  Since $D \odot D^\top$ is in the lineality of $C_\cT$ by Lemma~\ref{lem:lineality}, we conclude that the set of matrices $A_T$ and the set of matrices $-B_T$ coincide modulo $L$.  The matrices $A_T$ and $B_T$ are matrices whose entries are linear forms in the internal edge lengths of the symbic trees, which must be positive.  This shows that the set $C_\cT$ has a linear parameterization by positive real numbers, so it is a polyhedral cone.
    \end{proof}


 \begin{example}
 \label{ex:4x4matrix}
 For $4 \times 4$ symmetric tropical rank two matrices, the lineality space consists of matrices of the form
 $$L =
 \begin{bmatrix}
2d & d+e & d+f & d+g \\
d+e & 2e & e+f & e+g \\
d+f & e+f & 2f & f+g \\
d+g & e+g & f+g & 2g
\end{bmatrix}
$$
where $d,e,f,g$ are any real numbers.
    For the symbic tree $\cT$ Figure~\ref{fig:4x4treeAMatrix} with the chosen fixed vertex $O$, the matrix $A_T$ and $B_T$ defined above are
$$A_T=\begin{bmatrix}
0 & a & 0 & 0 \\
a & 0 & 0 & 0 \\
0 & 0 & b & b \\
0 & 0 & b & b+c
\end{bmatrix}, ~~ B_T=\begin{bmatrix}
2a & 0 & a+b & a+b+c \\
0 & 2a & a+b & a+b+c \\
a+b & a+b & 0 & c \\
a+b+c & a+b+c & c & 0
\end{bmatrix}$$
By Proposition~\ref{prop:generators} the polyhedral cone $C_\cT$ has the parameterizations
\begin{align*}
C_\cT & = \{A_T + L \mid a,b,c,d \geq 0; d,e,f,g \in \RR\},\\
C_\cT &= \{-B_T + L \mid a,b,c,d \geq 0; d,e,f,g \in \RR\}.
\end{align*}
As in the proof of Proposition~\ref{prop:generators}, we can see that
\[
2A_T + B_T = \begin{bmatrix}
2a & 2a & a+b & a+b+c \\
2a & 2a & a+b & a+b+c \\
a+b & a+b & 2b & 2b+c \\
a+b+c & a+b+c & 2b+c & 2b+2c
\end{bmatrix} = \begin{bmatrix}
    a \\ a \\ b \\ b+c
\end{bmatrix} \odot \begin{bmatrix}
    a \\ a \\ b \\ b+c
\end{bmatrix}^\top
\]
where the entries $(a,a,b,b+c)$ are the distances from the chosen fixed vertex $O$ to the leaves $1,2,3,4$   respectively.

 \begin{figure}[H]
    \centering
    \begin{tikzpicture}[line cap=round,line join=round,x=1cm,y=1cm, scale=0.5]

\draw [line width=1pt] (13,-1)-- (13,2);
\draw [line width=1pt] (13,2)-- (14.5,3);
\draw [line width=1pt] (13,2)-- (11.5,3);
\draw [line width=1pt,color=blue] (14.5,3)-- (13.5,4);
\draw [line width=1pt,color=blue] (11.5,3)-- (12.5,4);
\draw [line width=1pt,color=blue] (13,1)-- (14,0);
\draw [line width=1pt,color=blue] (13,-1)-- (14,-2);
\draw [line width=1pt,color=red] (13,1)-- (12,0);
\draw [line width=1pt,color=red] (11.5,3)-- (10.5,4);
\draw [line width=1pt,color=red] (14.5,3)-- (15.5,4);
\draw [line width=1pt,color=red] (13,-1)-- (12,-2);

\draw (13,2) node {$\bullet$};
\draw (13,2.7) node {$O$};

\draw (12,2.2) node {$a$};
\draw (14,2.2) node {$a$};
\draw (12.7,1.5) node {$b$};
\draw (12.7,-0.3) node {$c$};

\draw[color=blue] (13.5,4.4) node {$1'$};
\draw[color=blue] (12.5,4.4) node {$2'$};
\draw[color=blue] (14.3,-0.3) node {$3'$};
\draw[color=blue] (14.3,-2.3) node {$4'$};
\draw[color=red] (10.5,4.4) node {$1$};
\draw[color=red] (15.5,4.4) node {$2$};
\draw[color=red] (11.7,-0.3) node {$3$};
\draw[color=red] (11.7,-2.3) node {$4$};

\end{tikzpicture}
    \caption{A $4 + 4$ symbic tree.  A chosen fixed vertex $O$ can be used to parameterize the set of matrices corresponding to this tree.  See Example \ref{ex:4x4matrix}.}
    \label{fig:4x4treeAMatrix}
\end{figure}
\end{example}

\begin{proposition}
    \label{prop:simplicial}
The combinatorial symbic trees on $2n$ leaves form a simplicial complex.  The tropical variety of $n\times n$ symmetric tropical rank 2 matrices has a simplicial fan structure with this simplical complex.
\end{proposition}

\begin{proof}
For each $n \geq 2$, the set of combinatorial symbic trees on $2n$ leaves form a simplicial complex as follows.  
For a symbic trees, its collection of splits is invariant under the color changing involution.  Thus we will consider its splits up to symmetry. For any collection of symmetry classes of splits arising from a symbic tree $T$, any subcollection also arises from a symbic tree, namely the one obtained by contracting some of the edges of $T$.  This shows that the set of combinatorial symbic trees form cells of a simplicial complex whose vertices are the symmetry classes of splits.

The cones in Proposition~\ref{prop:generators} are compatible with this simplicial complex structure.  The cones corresponding to distinct combinatorial symbic trees are disjoint, as a symmetric tropical rank $2$ matrix uniquely determines a symbic tree.
For each combinatorial symbic tree $\cT$, it follows from the description $-B_T$ that, modulo the linear space $L$, the cone $C_{\cT}$ consists of nonnegative linear combinations of the matrices corresponding to symbic trees with a single symmetry class of splits. For two combinatorial symbic trees $\cT, \cT'$, let $\cT''$ be the combinatorial symbic tree whose set of splits is the intersection of splits of $\cT$ and splits of $\cT'$.  Then by the description of $-B_T$, we have $\overline{C_{\cT''}} = \overline{C_{\cT}} \cap \overline{C_{\cT'}}$. This shows that the tropical variety of symmetric tropical rank $2$ matrices is a simplicial fan whose cones are in bijection with symbic trees, and this bijection respects the simplicial complex structure of symbic trees described above.
\end{proof}

\begin{remark}\label{rem:fanstructurerefinement}
    The fan structure of $n + n$ symbic trees refines the fan structure of $n \times n$ symmetric tropical rank 2 matrices given by the $3\times 3$ minors.  In other words, if two symmetric tropical rank $2$ matrices correspond to the same combinatorial type of symbic tree, then for each of their $3\times 3$ tropical minors the minima are attained at the same places in both matrices, regardless of the lengths of internal edges.  A $3 \times 3$ minor corresponds to a $6$-leaf subtree with $3$ red leaves and $3$ blue leaves, and each term in the $3 \times 3$ minor corresponds to a matching between the red and blue leaves.  One can check, by enumerating all $6$-leaf trees with colored leaves and using the matrix $- B_T$ constructed above, that whether a matching achieves the maximum total edge length does not depend on the values of the edge lengths, and only depends on the combinatorial type of the subtree.
    \qed
\end{remark}


    

We provide a list of terminology for symbic trees that will be used for the remainder of the paper.
A \textit{cherry} of a symbic tree refers to a pair of leaves $i$ and $j'$ that are adjacent to the same internal vertex. The \textit{trunk} of a symbic tree is the path in the tree that is fixed under the color-swapping automorphism. 
A symbic tree whose trunk is a single vertex of degree 2, and whose internal vertices form a path, is called a \textit{caterpillar symbic tree}. See Figure~\ref{fig:brittlebranchexample} for examples. If the trunk of a tree is not a single vertex, the symbic tree has only degree $3$ or higher internal vertices assuming all internal edges have positive lengths. We say that a subtree of a symbic tree is a \textit{branch} if it is a maximal subtree that ``branches out'' from a vertex on the trunk, and its intersection with the trunk is only that vertex.
We say that a symbic tree with $n$ vertices is \textit{regular} if it has $n-1$ edges (up to symmetry) of nonzero length. If a symbic tree has at least one edge with its length zero, then we call it \textit{singular}. 
Note that in a regular symbic tree, each vertex on the trunk is connected to exactly two branches that are color-swapped opposites of each other, and possibly up to two edges on the trunk. Moreover, each branch is a trivalent subtree. 
For the remainder of the paper, we assume, unless explicitly specified otherwise, that we are exclusively working with regular symbic trees. 
For a regular symbic tree, we can perform an operation on this tree by contracting one edge and extending the other edge to obtain another regular symbic tree. We refer to this operation as a \textit{transition} from one regular symbic tree to another.

\section{Shellability}
\label{sec:Shellability}
We saw in Proposition~\ref{prop:simplicial} that the combinatorial symbic trees form a simplicial complex. 
We will now show that this simplicial complex is shellable, analogously to the result for bicolored trees in~\cite{MarkwigYu}. 
A simplicial complex is called \textit{shellable} if there exists a shelling order as defined below.  A topological realization of any shellable simplcial complex is homotopy equivalent to a wedge of sphere.  In particular, it has homology only in the top dimension. Thus shellability is a combinatorial technique for proving that a tropical variety has nice topology.

\begin{definition}
    A \textit{shelling} of a pure-dimensional simplicial complex is a total ordering $<$ on the maximal cells so that for any two maximal cells $C' < C$ there exists another maximal cell $C''$ such that
    \begin{itemize}
        \item $C' \cap C \subseteq C'' \cap C$
        \item $C'' < C$
        \item $C \setminus C'' = x$ where $x$ is a vertex of $C$. In other words, $C''$ and $C$ differ in exactly one vertex.
    \end{itemize}
\end{definition}

The goal is to show that the simplicial complex of (combinatorial) symbic trees is shellable.  The vertices of this simplicial complex correspond to symbic trees with only one internal edge up to color-swapping symmetry.  Each cell in the simplicial complex corresponds to a symbic tree, which can be identified with the collection of its symmetry classes of splits.

We will proceed by induction on the number of leaves.  
There are only two $2+2$ symbic trees, each with one split each. The simplicial complex consists of two disjoint vertices corresponding to the two splits, and it is shellable.
  For the induction step, let $n\geq 3$, and for each $(n-1) + (n-1)$ symbic tree, we want to order the ways to obtain $n + n$ symbic trees by attaching $n$ and $n'$ to it.  
  
  We can attach $n$ and $n'$  to either the interior of an existing edge or as a new cherry after extending the trunk on either end.  We will order all these ways of attaching $n$ and $n'$ to the same $(n-1) + (n-1)$ symbic tree. Choose an endpoint $v$ of the trunk of an $(n-1) + (n-1)$ symbic tree and consider a topological ordering of the edges starting from $v$.  That is, $e_1 < e_2$ when the path from $e_2$ to $v$ contains the edge $e_1$, and we extend this partial order arbitrarily to a total ordering. Then we get an ordering of the edges of the tree. Add a least element to that topological ordering corresponding to attaching $n$ to the trunk at $v$, and, if another trunk endpoint exists, a greatest element corresponding to attaching $n$ at the other trunk endpoint vertex. This is an ordering of the ways to attach $n$ and $n'$ to an $(n-1) + (n-1)$ symbic tree.

However, not every $n + n$ symbic tree arises this way.  In other words, removing $n$ and $n'$ from an $n + n$ symbic tree may not  result in an $(n-1) + (n-1)$ symbic tree. 
For example, the symbic tree in Figure~\ref{fig:brittlebranchexample}(a), removing the leaves $4$ and $4'$ result in a $3 + 3$ symbic tree, but in Figure~\ref{fig:brittlebranchexample}(b) removing $4$ and $4'$ does not result in a symbic tree because Figure~\ref{fig:brittlebranchexample}(d) results in splits containing only one color on each side.

\begin{definition}
Let $T$ be an $n + n$ symbic tree.
Assume that $n$ is a leaf of a cherry on a branch of $T$. 
If removing $n'$ results in a split where one side is uni-colored with at least two leaves, then this uni-colored part is called a \textit{brittle twig} of $T$.  
\end{definition}

See Figure~\ref{fig:brittlebranchexample} for examples.
Removing $n,n'$ from an $n + n$ symbic tree results in an $(n-1)+(n-1)$ symbic tree if and only if it does not contain a brittle twig.
A branch with a brittle twig itself forms a caterpillar subtree of $T$ with leaves labeled $(i_1,i_2,i_3 ,\dots, i_k)$ in this order, where $k \geq 2$ and $i_1$ is adjacent to $n'$ in $T$.  We will use this sequence $(i_1,i_2,i_3 ,\dots, i_k)$ to identify the brittle branch.

 \begin{figure}
    \centering
\begin{tikzpicture}[line cap=round,line join=round,x=1cm,y=1cm, scale=0.5]

\draw [line width=1pt] (0,-0.5)-- (2,0);
\draw [line width=1pt] (0,-0.5)-- (-2,0);
\draw [line width=1pt] (2,0)-- (3,1);
\draw [line width=1pt] (-2,0)-- (-3,1);
\draw [line width=1pt] (3,1)-- (3,2);
\draw [line width=1pt] (-3,1)-- (-3,2);

\draw [line width=1pt, color=red] (-2,0) -- (-3,-1);
\draw [line width=1pt, color=blue] (-3,1) -- (-4, 0);
\draw [line width=1pt, color=red] (-3,2) -- (-4,3);
\draw [line width=1pt, color=blue] (-3,2) -- (-2,3);

\draw [line width=1pt, color=blue] (2,0) -- (3,-1);
\draw [line width=1pt, color=red] (3,1) -- (4, 0);
\draw [line width=1pt, color=blue] (3,2) -- (4,3);
\draw [line width=1pt, color=red] (3,2) -- (2,3);

\draw[color=blue] (-2,3.5) node {$4'$};
\draw[color=red] (-4,3.5) node {$1$};
\draw[color=blue] (-4.3, -0.1) node {$2'$};
\draw[color=red] (-3.2, -1.3) node {$3$};

\draw[color=red] (2,3.5) node {$4$};
\draw[color=blue] (4,3.5) node {$1'$};
\draw[color=red] (4.3, -0.1) node {$2$};
\draw[color=blue] (3.2, -1.3) node {$3'$};

\draw (0, -2) node {(a)};
\end{tikzpicture}
\begin{tikzpicture}[line cap=round,line join=round,x=1cm,y=1cm, scale=0.5]
    
\draw [line width=1pt] (0,-0.5)-- (2,0);
\draw [line width=1pt] (0,-0.5)-- (-2,0);
\draw [line width=1pt] (2,0)-- (3,1);
\draw [line width=1pt] (-2,0)-- (-3,1);
\draw [line width=1pt] (3,1)-- (3,2);
\draw [line width=1pt] (-3,1)-- (-3,2);

\draw [line width=1pt, color=blue] (-2,0) -- (-3,-1);
\draw [line width=1pt, color=red] (-3,1) -- (-4, 0);
\draw [line width=1pt, color=red] (-3,2) -- (-4,3);
\draw [line width=1pt, color=blue] (-3,2) -- (-2,3);

\draw [line width=1pt, color=red] (2,0) -- (3,-1);
\draw [line width=1pt, color=blue] (3,1) -- (4, 0);
\draw [line width=1pt, color=blue] (3,2) -- (4,3);
\draw [line width=1pt, color=red] (3,2) -- (2,3);

\draw[color=blue] (-2,3.5) node {$4'$};
\draw[color=red] (-4,3.5) node {$1$};
\draw[color=red] (-4.3, -0.1) node {$2$};
\draw[color=blue] (-3.2, -1.3) node {$3'$};

\draw[color=red] (2,3.5) node {$4$};
\draw[color=blue] (4,3.5) node {$1'$};
\draw[color=blue] (4.3, -0.1) node {$2'$};
\draw[color=red] (3.2, -1.3) node {$3$};

\draw (0, -2) node {(b)};
\end{tikzpicture}
\begin{tikzpicture}[line cap=round,line join=round,x=1cm,y=1cm, scale=0.5]

\draw [line width=1pt] (0,-0.5)-- (2,0);
\draw [line width=1pt] (0,-0.5)-- (-2,0);
\draw [line width=1pt] (2,0)-- (3,1);
\draw [line width=1pt] (-2,0)-- (-3,1);

\draw [line width=1pt, color=red] (-2,0) -- (-3,-1);
\draw [line width=1pt, color=blue] (-3,1) -- (-2, 2);
\draw [line width=1pt, color=red] (-3,1) -- (-4,2);

\draw [line width=1pt, color=blue] (2,0) -- (3,-1);
\draw [line width=1pt, color=red] (3,1) -- (2, 2);
\draw [line width=1pt, color=blue] (3,1) -- (4,2);

\draw[color=red] (-4,2.5) node {$1$};
\draw[color=blue] (-2, 2.5) node {$2'$};
\draw[color=red] (-3.2, -1.3) node {$3$};

\draw[color=blue] (4,2.5) node {$1'$};
\draw[color=red] (2, 2.5) node {$2$};
\draw[color=blue] (3.2, -1.3) node {$3'$};

\draw (0, -2) node {(c)};
\end{tikzpicture}
\begin{tikzpicture}[line cap=round,line join=round,x=1cm,y=1cm, scale=0.5]

\draw [line width=1pt] (0,-0.5)-- (2,0);
\draw [line width=1pt] (0,-0.5)-- (-2,0);
\draw [line width=1pt] (2,0)-- (3,1);
\draw [line width=1pt] (-2,0)-- (-3,1);

\draw [line width=1pt, color=blue] (-2,0) -- (-3,-1);
\draw [line width=1pt, color=red] (-3,1) -- (-2, 2);
\draw [line width=1pt, color=red] (-3,1) -- (-4,2);

\draw [line width=1pt, color=red] (2,0) -- (3,-1);
\draw [line width=1pt, color=blue] (3,1) -- (2, 2);
\draw [line width=1pt, color=blue] (3,1) -- (4,2);

\draw[color=red] (-4,2.5) node {$1$};
\draw[color=red] (-2, 2.5) node {$2$};
\draw[color=blue] (-3.2, -1.3) node {$3'$};

\draw[color=blue] (4,2.5) node {$1'$};
\draw[color=blue] (2, 2.5) node {$2'$};
\draw[color=red] (3.2, -1.3) node {$3$};

\draw (0, -2) node {(d)};
\end{tikzpicture}
    \caption{
The trees (a) and (b) are symbic trees, and the trees (c) and (d) are the result of removing $4$ from (a) and (b) respectively.  The tree (c) is symbic, while (d) is not as the bicoloring condition is not satisfied.  Thus (a) is not \emph{brittle}, but (b) has a \emph{brittle twig} $(1,2)$. }
    \label{fig:brittlebranchexample}
\end{figure}

We now define a shelling order for the $n + n$ symbic trees as follows. Fix a topological ordering, as discussed earlier, of the edges and trunk endpoints of every $(n-1) + (n-1)$ symbic tree. For two symbic trees $T$ and $T'$ on $2n$ leaves, we have $T' < T$ when one of the following is true:
\begin{enumerate}
    \item Neither $T'$ nor $T$ have brittle twigs, and after deleting $n$ and $n'$ from both $T'$ and $T$, the resulting symbic trees $S'$ and $S$ have $S' < S$ in their ordering.
    
    \item Neither $T'$ nor $T$ have brittle twigs, and after deleting $n$ from both $T'$ and $T$, the resulting symbic trees $S'$ and $S$ have $S' = S$. Let $L$ be the chosen topological ordering on $S$. The neighbors of $n$ (in $T$ as well as $T'$) are elements of $L$; they are either a trunk vertex endpoint of $S$, or a vertex arising from subdividing an edge of $S$. We have $T' < T$ when the place where $n$ is attached in $T'$ is comes earlier $L$ than the place where $n$ is attached in $T$.

    \item $T'$ does not have a brittle twig but $T$ does.
    
    \item $T'$ and $T$ both have brittle twigs $(i_1,i_2, \dots ,i_k)$ and $(j_1,j_2, \dots, j_l)$ respectively, and the twig $(i_1,i_2,\dots, i_k)$ is lexicographically earlier than the twig $(j_1,j_2, \dots, j_l)$. For example, if trees $T''$, $T'$, and $T$ had the twigs $(1, 2, 3, 4)$, $(1,2,3,4,5)$, $(5,1)$ respectively, they would be ordered $T'' < T' < T$. \label{ordering:twiglexicographic}
    
    \item $T'$ and $T$ both have brittle twigs with identical ordered subsets, and after deleting the brittle twigs and replacing them with $n'$, 
    then the resulting symbic tree with fewer leaves corresponding to $T'$, comes earlier than the tree corresponding to $T$ in the defined ordering.
\end{enumerate}

\begin{theorem}
  The simplicial complex of $n + n$ symbic trees is shellable for each $n \geq 1$.  The above ordering is a shelling order.
\end{theorem}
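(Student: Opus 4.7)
The plan is to prove the theorem by induction on $n$, handling the five cases in the definition of the ordering one by one and in each case exhibiting the required shelling witness $T''$. Recall that a vertex of the simplicial complex corresponds to a single symmetric split, a maximal cell is determined by the collection of its splits, and $T''$ differing from $T$ in exactly one vertex means $T''$ is obtained from $T$ by a single transition (contract one internal edge, extend another).

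The base case $n = 1$ is trivial since there is a unique $1+1$ symbic tree. For the inductive step, fix maximal symbic trees $T' < T$.

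In case (1), neither $T$ nor $T'$ has a brittle twig, so deleting $n, n'$ yields $(n{-}1)+(n{-}1)$ symbic trees $S < S$ (with $S' < S$). By induction there is a witness $S''$ for $(S', S)$. I will define $T''$ by reattaching $n, n'$ to $S''$ at the same location where they are attached in $T$. The key step is to check that $T''$ is still a legitimate symbic tree (no brittle twig appears after reattachment, because the local structure around the attachment point in $S''$ differs from $S$ only by transitioning away from that point), and that the three shelling conditions transfer through reattachment. In case (2), $S' = S$ and $T$, $T'$ differ only in where $n$ (and symmetrically $n'$) is attached; I take $T''$ to be the tree obtained from $T$ by moving the attachment of $n$ to the immediate predecessor in the fixed topological ordering, which is a single transition. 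Since the topological order was extended from a partial order compatible with tree directions, the shared splits of $T$ and $T'$ survive.

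The main work lies in cases (3)--(5), where brittle twigs enter. In case (3), $T$ has a brittle twig $(j_1,\dots,j_l)$ but $T'$ does not. I will take $T''$ to be the tree obtained from $T$ by the single transition that contracts the edge attaching the brittle twig to $n'$ and extends the adjacent edge so as to separate $j_1$ from $n'$ — that is, the transition that destroys the brittle-twig split. This produces a tree without that particular brittle twig, hence earlier than $T$ in the ordering, and the split of $T$ eliminated by the transition cannot be a split of $T'$ (because $T'$ has no brittle twig at all), so $T \cap T' \subseteq T'' \cap T$. In case (4), both have brittle twigs, and $T$'s is lexicographically larger; I apply the analogous transition at the lexicographically latest disagreement in the twig of $T$, strictly reducing the twig in the lex order while preserving all splits not supported on that twig. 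In case (5), the brittle twigs of $T$ and $T'$ agree; after replacing them by a single leaf $n'$ I obtain smaller symbic trees with the strict inequality preserved, and I invoke the inductive hypothesis, then restore the twig in the witness to produce $T''$.

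The main obstacle I expect is the verification $T' \cap T \subseteq T'' \cap T$ in cases (2) and (3). The shared splits of $T$ and $T'$ must be shown to persist after the transition, which requires pinning down precisely which split of $T$ is destroyed by the transition and arguing that it cannot also be a split of $T'$. In case (3) this uses the absence of the brittle twig in $T'$; in case (2) it uses the topological compatibility of the edge ordering with rootedness at the trunk endpoint $v$, so that moving the attachment point of $n$ one step earlier only destroys the split adjacent to $n$, which differs between $T$ and $T'$ by assumption. Once these verifications are in place, the five cases together yield a valid shelling order and the theorem follows.
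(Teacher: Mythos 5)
Your overall strategy coincides with the paper's: induct on $n$ and, for each clause of the ordering, exhibit the witness $T''$ as a single transition applied to $T$. However, two of your constructions fail as stated. In case (3), you propose to contract the edge attaching the brittle twig to $n'$ and to separate the twig element $i_1$ adjacent to $n'$ from $n'$. That transition acts on the cherry $\{i_1,n'\}$: contracting the internal edge below this cherry produces a degree-four vertex with leaves $i_1$, $n'$, $i_2$, and of the two alternative re-expansions, one creates the unicolored split $\{i_1,i_2\}$ and hence is not a symbic tree at all, while the other merely transposes $i_1$ and $i_2$ in the twig, which makes the twig lexicographically \emph{later} whenever $i_1<i_2$. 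Moreover the split you destroy, $\{i_1,n'\}$ versus the rest, can perfectly well occur in $T'$ (a tree can contain that cherry without having any brittle twig), so the required containment $T'\cap T\subseteq T''\cap T$ also fails. The working move operates at the \emph{other} end of the twig: contract the edge separating the entire set $\{n',i_1,\dots,i_k\}$ from the rest of the tree --- a split that cannot occur in a twig-free tree --- and extend so that the innermost element $i_k$ leaves the twig, which strictly shortens it and hence moves earlier in the order by clause (4).

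A second gap is in case (1). You reattach $n,n'$ to the inductive witness $S''$ ``at the same location where they are attached in $T$,'' but $S''$ differs from $S$ in exactly one split, and that split may be precisely the edge of $S$ that was subdivided by $x_T$ to attach $n$; in that subcase the attachment location no longer exists in $S''$ and your construction is undefined. This subcase needs a separate argument: when the edge at which $S''$ and $S$ differ is the one smoothed by deleting $n$, both splits at the attachment vertex $x_T$ are absent from $T'$, so one instead moves the leaf $n$ trunk-ward inside $T$ itself and concludes via case (2). Relatedly, in case (4) the phrase ``the analogous transition at the lexicographically latest disagreement'' conceals the essential subcase analysis: when the path from $j_m'$ to the leaf labelled $i_m$ stays inside the twig of $T$ one must locate an adjacent descending pair in the twig and transpose it, and then check that the destroyed split has $n$ and $j_m'$ on one side and $i_m'$ on the other, so that it cannot be a split of $T'$. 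Without these repairs the argument does not go through.
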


\begin{proof}
    What we need to show is that, for any two symbic trees $T' < T$ on $2n$ leaves, there exists a symbic tree $T''$ on $2n$ leaves so that $T'' < T$ and $T''$ and $T$ differ by exactly one split $e$ that does not appear in $T'$. In particular, $T''$ should be obtained by a transition applied on $T$,
    and no split corresponding to edges that the transition was applied to should appear in $T'$.
    
    We consider case by case. As per the definition of the shelling order, let $x_{T}$ be the vertex adjacent to $n$ in $T$.
    
    \begin{enumerate}
        \item The symbic trees $T'$ and $T$ yield different symbic trees $S'$ and $S$ when deleting $n$. Thus, since $S$ and $S'$ are symbic trees on $2(n-1)$ leaves, we have a shelling order, and there is a $S''$ satisfying $S'' < S$, and $S''$ differs from $S$ by a single edge $e_S$. If $e_S$ corresponds to an edge of $T$ (in other words, $e_S$ is an edge that wasn't smoothed after deleting $n$), then this same split in $S''$ yields a new tree $T''$ with the desired properties by reattaching $n$ where it was in $T$.
        If $e_S$ was smoothed after deleting $n$, then since that edge $e_S$ was subdivided by $x_T$ and $n$ attached to form $T$, we know that $T'$ differs from $T$ by both splits that the non-leaf edges of $x_T$ correspond to. Thus, applying transition on either split to ``move'' the leaf $n$ yields a new symbic tree that differs from $T$ by one split $e$ that is not in $T'$. Since one of those transitions must move the leaf $n$ trunk-ward, choosing that transition lets us move the leaf to somewhere earlier in $L$ than it is in $T$, by case 2. Hence, we have $T'' < T$.
        \item The symbic trees $T'$ and $T$ have the same symbic tree $S' = S$ remaining after deleting $n$, and the place where $n$ was attached in $T'$ came before where $T$ attached it in the ordering. Let $e'$ and $e$ be the edges of $T'$ and $T$ respectively where $n$ is attached. We have a topological ordering $L$ on the edges of $S$, so let $v$ be the trunk vertex that started the ordering.
        Consider $x_{T}$ in $T$. It has two edges not leading to $n$, and one of them lies on the path from $x_T$ to $v$. If we apply a transition to ``move'' the leaf $n$ closer to $v$, we obtain a new tree $T''$ with $T'' < T$ because the edge that $x_{T''}$ lies on is closer to $v$ than the edge that $x_T$ lies, so is smaller in the ordering $L$. It remains to show that the edge we contract corresponds to a split not in $T'$. Notice that the splits that differ between $T$ and $T'$ all lie on the path between $x_T$ and $x_{T'}$ in $S$, including the two edges subdivided to form $x_T$ and $x_{T'}$. Furthermore, this path is the symmetric difference of the path from $x_T$ to $v$, and the path from $x_{T'}$ to $v$, as this is a tree and paths are unique. Therefore, the only way for the edge we contract to \emph{not} be on the $x_T$ to $x_{T'}$ path is if the path from $x_{T'}$ to $v$ contains the path from $x_T$ to $v$, but that would make $e <_L e'$ in the topological order.

        \item If $T$ has a brittle twig $i_1i_2 \dots i_k$ and $T'$ does not, then the edge connecting the twig to the rest of the tree gives the split with one partition $\{n, i_1', \dots, i_k'\}$. This split cannot exist in $T'$, as it can only appear in a twig. Applying a transition on this edge so that $i_k'$ is no longer part of the twig yields a tree $T''$ with a smaller twig than $T$, so $T'' < T$ by case (4).
        
        \item Let $T'$ and $T$ both have brittle twigs $i_1i_2 \dots i_k$ and $j_1j_2 \dots j_l$ respectively. If $T' < T$ because $i_1 i_2 \dots i_k$ is a prefix of $j_1j_2 \dots j_l$, then contracting the edge connecting the twig of $T$ to the rest of the tree and expanding $j_l$ away yields a tree $T''$ with a smaller twig, as per the previous case. This split is the entire brittle twig of $T$ with the rest of the tree, so $T'$ cannot contain this split.
        
        If $i_1 i_2 \dots i_k$ is not a prefix of $j_1j_2 \dots j_l$, let $m$ be the first index where the twigs differ, and so $i_m < j_m$ and $i_k = j_k$ for $1 \le k < m$. Then the every edge on the path in $T$ between $j_m'$ and where the leaf labelled $i_m'$ is gives a split that is not in $T'$. If this path is not strictly contained in the twig, then as before we can shorten the twig of $T$ to yield $T''$. If the path does not leave the twig, then the label $i_m$ must be found somewhere on the twig $j_1 \dots j_l$, and since $m$ is the first index the two twigs differ, it must appear at an index later than m. So in the brittle twig of $T$ there is a subsequence $j_mj_{m+1}\dots j_{m+p}$ where $j_{m+p} = i_m$, and since $j_{m+p} < j_m$ there must be a pair of adjacent leaves in this subsequence whose leaf labels are decreasing. Transposing those leaves gives a tree $T''$ whose twig is lexicographically earlier than $T$, as desired. This split contains $n$ and $j_m'$ on one side and $j_{m+p}' = i_m'$ on the other, and so is not in $T'$.
        
        \item Let the brittle twig that $T$ and $T'$ share be $i_1,i_2,\dots,i_k$.
        Then it is straightforward to see that removing and replacing the brittle twig $n, i_1,i_2,\dots,i_k$ with $n'$ (and applying the equivalent operation on the color-swapped branch on the other side), yields new symbic trees on $[n]\setminus \{i_1, \dots, i_k\}$. 
      Let $S$ and $S'$ be the symbic trees on $2(n-k)$ leaves obtained this way after relabelling the leaf sets $[n] \setminus \{i_1, \dots, i_k\}$ to $[n-k]$ in the canonical order-preserving way.

        Inductively, we have an ordering on $S$ and $S'$, and in particular, there is some $S''$ that differs from $S$ by one split not found in $S'$. This $S''$ corresponds to another tree $T''$ formed from the reverse process of relabelling the leaves back to $[n] \setminus \{n, i_1, \dots, i_k\}$, and replacing the leaf labelled $n'$ with the twig $(i_1i_2 \dots i_k)$, and $T''$ is the desired tree.\qedhere
    \end{enumerate}
\end{proof}

\section{Symbic tree enumeration}
\label{sec:enumeration}
We provide an exponential generating function for the number of regular combinatorial symbic trees. First we find an exponential generating function for the number of $n + n$ regular symbic trees with a one-vertex trunk, and an exponential generating function for the number of $n + n$ regular symbic trees with an $n$-vertex trunk.

\begin{lemma}\label{lem:1-vertex-trunk}
    The recursive formula for $a_n$ the number of $n + n$ regular symbic trees with a one-vertex trunk is $$a_n = \sum_{k=1}^{n-1} {n \choose k} a_k a_{n-k}, n \ge 3, \quad\text{with}\quad a_1 = a_2 = 1.$$
    The exponential generating function is 
    $$E_1(x) = \frac{1}{2} \left(1 -  \sqrt{1-4x+2x^2}\right).$$

    In particular, the first few terms of the sequence are $a_0 = 0$, $a_1 = 1$, $a_2 = 1$, $a_3 = 6$, $a_4 = 54$. It appears on the OEIS \cite{oeis} as Sequence A137591.
\end{lemma}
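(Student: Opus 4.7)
The plan has two parts: establish the recurrence for $n \geq 3$ by a bijective decomposition of a one-vertex trunk symbic tree at the root of one of its branches, then translate the recurrence into a quadratic equation for the exponential generating function $E_1(x) = \sum_{n \geq 1} a_n x^n/n!$ and solve. First, I would verify $a_1 = a_2 = 1$ directly: the unique $1+1$ symbic tree is just the edge joining $1$ and $1'$, and for $n = 2$ the only valid one-vertex trunk tree pairs the cherries $\{1, 2'\}$ and $\{1', 2\}$ across the trunk (the alternative pairing $\{1, 2\}, \{1', 2'\}$ violates bicoloring on the internal edge).

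For $n \geq 3$, given such $T$ with trunk vertex $v$, let $B$ be the branch of $T$ containing leaf $1$ and let $r$ be the vertex of $B$ adjacent to $v$. Since $B$ is trivalent with $n \geq 3$ leaves, $r$ has two children whose subtrees are sub-branches $B_1$ (containing leaf $1$) and $B_2$. Let $S \subseteq [n]$ index the pairs represented in $B_1$, so that $1 \in S$ and $1 \leq |S| \leq n-1$. Attaching $B_1$ and its mirror $B_1^*$ in $B'$ to a new trunk vertex produces a one-vertex trunk symbic tree $T_1$ on the $S$-indexed pairs, and analogously one forms $T_2$ on the $S^c$-indexed pairs. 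Together with the choice $d_2 \in \{B_2, B_2^*\}$ recording which branch of $T_2$ aligned with the $B$-side of $T$, the map $T \mapsto (S, T_1, T_2, d_2)$ is a bijection onto tuples with $1 \in S \subsetneq [n]$, a symbic tree on $S$, a symbic tree on $S^c$, and a distinguished branch of the latter. Its inverse glues the branch of $T_1$ containing leaf $1$ to $d_2$ at a new vertex $r$ attached to a new trunk vertex $v$, then mirrors the construction. Enumeration gives $a_n = \sum_{S \ni 1,\; S \subsetneq [n]} a_{|S|} \cdot 2a_{n-|S|} = 2 \sum_{k=1}^{n-1} \binom{n-1}{k-1} a_k a_{n-k}$. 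Applying $\binom{n}{k} = \binom{n-1}{k-1} + \binom{n-1}{k}$ to the stated right-hand side and reindexing the $\binom{n-1}{k}$ terms by $k \leftrightarrow n-k$ shows this equals $\sum_{k=1}^{n-1} \binom{n}{k} a_k a_{n-k}$, as claimed.

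For the generating function, since the recurrence holds for $n \geq 3$ and the coefficient of $x^2/2!$ in $E_1(x)^2$ is $\binom{2}{1} a_1 a_1 = 2$, one obtains $E_1(x)^2 = x^2 + (E_1(x) - x - x^2/2) = E_1(x) - x + x^2/2$. Solving this quadratic in $E_1$ and selecting the root with $E_1(0) = 0$ yields $E_1(x) = \tfrac{1}{2}(1 - \sqrt{1 - 4x + 2x^2})$. The main obstacle is showing the reassembly always produces a valid symbic tree for $n \geq 3$; the non-trivial point is bicoloring at the split through the edge $v$--$r$, which requires $B = B_1 \cup B_2$ to contain both colors. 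This is automatic because $n \geq 3$ forces $\max(|S|, |S^c|) \geq 2$, and a branch of a symbic tree on at least two pairs necessarily contains both colors by the bicoloring condition applied to the trunk-to-branch edge in that smaller tree.
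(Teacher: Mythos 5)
Your argument is correct and follows essentially the same route as the paper: both reduce a one-vertex-trunk symbic tree to a single bicolored rooted binary branch, decompose that branch at its root into two child subtrees to obtain the recurrence (you count $2\sum_k\binom{n-1}{k-1}a_ka_{n-k}$ and convert via Pascal's identity, while the paper reaches $\binom{n}{k}$ directly through a coloring convention on the child not containing leaf $1$), and then solve the resulting quadratic for $E_1$. One small point in your favor: your functional equation $E_1(x)^2=E_1(x)-x+\tfrac12x^2$ carries the correct sign, whereas the paper's displayed equation $E_1(x)^2=E_1(x)-x-\tfrac12x^2$ contains a sign typo, since only $+\tfrac12x^2$ is consistent with the stated closed form $\tfrac12\bigl(1-\sqrt{1-4x+2x^2}\bigr)$ and with $a_3=6$, $a_4=54$.
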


\begin{proof}
    We establish a combinatorial argument for the recursive formula. Note that a one-vertex trunk regular symbic tree consists of 2 color-swapped copies of a binary tree on $n$ leaves rooted at the trunk, where every non-leaf vertex has exactly 2 children, without ordering the children, and where any two leaves that share a parent must have opposite colors.

    Because of the color-swapping condition, we can restrict our attention to counting trees as above where the leaf $1$ is colored blue. The recurrence is as follows: There is exactly one such tree with one leaf, and one tree with $2$ leaves, as we need to satisfy the condition that leaves sharing a parent must have opposite colors, and the leaf $1$ is always blue. For an $n$-leaf tree, $n \ge 3$, we choose $k$ leaves to belong to one child, $1 \le k \le n-1$, and there are $a_k$ ways to arrange those children into a $k$-leaf tree, up to color-swapping. We are left with $n-k$ leaves to belong to the other child, and $a_{n-k}$ ways to arrange them into a tree up to color-swapping.

    However, only one of these two children can have the leaf $1$ and thus have their color predetermined by convention. We color the other tree as follows: If $1$ was in the $k$-leaf tree, we swap the colors of $(n-k)$-leaf tree so that the smallest leaf in that tree is red. Otherwise $1$ is in the $(n-k)$-leaf tree. It is blue there, and then we choose the colors of the $k$-leaf tree so that the smallest leaf in it is blue.

    This implicitly orders the children and gives the equivalence to the OEIS sequence A137591. 

    For the exponential generating function, we see that $E_1(x) = \sum_{n \ge 1} a_n\frac{x^n}{n!}$ satisfies \begin{align*}
        E_1(x)^2 &= \left(\sum_{i \ge 1} a_i\frac{x^i}{i!}\right)\left( \sum_{j \ge 1} a_j \frac{x^j}{j!}\right) \\&= \sum_{m \ge 2} \left( \sum_{i=1}^{m-1}\frac{a_i}{i!}\frac{a_{m-i}}{(m-i)!}\right)x^m \\&= x^2 + \sum_{m \ge 3} a_m \frac{x^m}{m!}.
    \end{align*}
    This gives us $E_1(x)^2 = E_1(x) - x - \frac12 x^2,$ and solving for $E_1(x)$ gives the formula for the exponential generating function as desired.
\end{proof}
\begin{lemma}\label{lem:n-vertex-trunk}
    The exponential generating function for the number of $n + n$ regular symbic trees with an $n$-vertex trunk is
    $$E_2(x) = \frac12 \left(1 + x + \frac1{1-x}\right).$$

    In particular, the first few terms of the sequences are $a_0 = 1, a_1 = 1, a_2 = 1, a_3 = 3, a_4 = 12.$
\end{lemma}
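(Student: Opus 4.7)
The plan is to first analyze the structure of a regular $n+n$ symbic tree whose trunk has exactly $n$ vertices, show that every such tree is a caterpillar with a single leaf on each branch, count these trees directly, and then compute the generating function.

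First I would carry out the structural step. In a regular symbic tree, every trunk vertex is incident to exactly two branches that are color-swapped opposites of each other. If the trunk has $n$ vertices, this gives $2n$ branches in total. Each branch is a nonempty subtree containing at least one leaf, so the total leaf count is at least $2n$. Since a symbic $n+n$ tree has exactly $2n$ leaves, every branch must consist of a single leaf edge. Consequently the tree is a caterpillar whose spine is the trunk $v_1,\dots,v_n$, and at each vertex $v_i$ one blue leaf labeled $\pi(i)$ is attached on one side of the trunk and its color-swapped partner $\pi(i)'$ on the other side, for some bijection $\pi:\{1,\dots,n\}\to\{1,\dots,n\}$.

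Next, I would count these trees. Each $\pi\in S_n$ determines a symbic tree, giving $n!$ configurations. The only ambiguity in this description is the orientation of the trunk: the configurations $\pi$ and its reversal $i\mapsto \pi(n+1-i)$ yield isomorphic labeled symbic trees (the choice of which trunk endpoint is ``first'' is not part of the data, and within each $v_i$ the two branches are interchangeable via the color-swapping involution, which is built into the symbic structure). For $n\geq 2$, the reversal $\pi\leftrightarrow \pi\circ\mathrm{rev}$ has no fixed points because a sequence of distinct labels cannot equal its own reverse. Hence the orbits all have size $2$ and $a_n=n!/2$. For $n=0$ (the empty tree by convention) and $n=1$ (a single cherry), we directly obtain $a_0=a_1=1$. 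This matches the listed initial terms.

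Finally, I would assemble the EGF. Using $a_0=a_1=1$ and $a_n=n!/2$ for $n\ge 2$,
\[
E_2(x)=\sum_{n\geq 0}a_n\frac{x^n}{n!}=1+x+\frac{1}{2}\sum_{n\geq 2}x^n=1+x+\frac{1}{2}\left(\frac{1}{1-x}-1-x\right)=\frac{1}{2}\left(1+x+\frac{1}{1-x}\right),
\]
as claimed. The only nontrivial step is the structural observation that every branch is a single leaf, but this is immediate from the pigeonhole bound on the $2n$ branches versus $2n$ leaves; the rest is a straightforward orbit count and power-series manipulation, so I do not expect any serious obstacle.
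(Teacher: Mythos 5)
Your proof is correct and follows essentially the same route as the paper: identify trees with an $n$-vertex trunk with permutations of $\{1,\dots,n\}$ up to reversal, count $n!/2$ for $n\ge 2$ with the small cases handled separately, and sum the series. The only difference is that you supply the pigeonhole justification (each of the $2n$ branches must be a single leaf) that the paper leaves implicit, which is a welcome addition rather than a deviation.
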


\begin{proof}
    An $n + n$ symbic tree with an $n$-vertex trunk is a permutation of $n$ up reversing the terms in the sequence. Thus, for $n \ge 2$, we have $\frac{n!}2$ many such symbic trees. For $n=1$ we have exactly one symbic tree, and we will need for $n=0$ to also have one tree.

    Thus, our exponential generating function has the form $$1 + x + \sum_{n\ge2} \frac{x^n}2 = \frac12 + \frac{x}2 + \sum_{n=0}^\infty \frac{x^n}2 = \frac12\left(1+x+\frac1{1-x}\right),$$ as desired.
\end{proof}

\begin{theorem}
    The exponential generating function for the number of $n + n$ regular symbic trees is $$E(x) = \frac34 - \frac{\sqrt{1-4x+2x^2}}4 + \frac1{1+\sqrt{1-4x+2x^2}}.$$
\end{theorem}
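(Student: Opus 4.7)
The plan is to recognize $E(x)$ as the species composition $E_2 \circ E_1$, reflecting the canonical decomposition of a regular symbic tree into a trunk together with a branch pair at each trunk vertex. Every regular $n + n$ symbic tree is determined by: a trunk of length $k$ for some $1 \le k \le n$; a partition of the $n$ leaf pairs $\{j,j'\}$ into $k$ nonempty blocks, one block assigned to each trunk vertex; and, for each trunk vertex, a branch pair (two color-swapped trivalent subtrees) on the assigned leaves---all considered up to reversing the trunk.

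I would first identify the local data. The branch pair at a single trunk vertex, together with that vertex, has exactly the structure of a regular symbic tree with a one-vertex trunk on the corresponding leaves, so by Lemma~\ref{lem:1-vertex-trunk} this local data is enumerated by the EGF $E_1(x)$. Next I would identify the global (trunk) data: a trunk of length $k$ with labeled blocks at its vertices, considered up to reversal, is exactly a linear order of the blocks up to reversal. Lemma~\ref{lem:n-vertex-trunk} computes precisely this species, since its $n$-vertex-trunk case is the specialization where each block is a singleton leaf pair, and the same argument enumerates linear-orders-up-to-reversal on arbitrary labeled block sets with EGF $E_2(x)$. Combining via the EGF composition rule then gives
$$E(x) \;=\; E_2(E_1(x)) \;=\; \frac{1}{2}\left(1 + E_1(x) + \frac{1}{1 - E_1(x)}\right).$$

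Finally, I would substitute $y = \sqrt{1 - 4x + 2x^2}$, so that $E_1(x) = (1-y)/2$ and $1 - E_1(x) = (1+y)/2$, and verify that the expression collapses to $\frac{3}{4} - \frac{y}{4} + \frac{1}{1+y}$, matching the claim. The main step that I expect to require care is justifying the species composition: confirming that every branch pair in a multi-vertex-trunk tree is genuinely an $E_1$-structure (so that the local EGF is correct), and that the only symmetry one must quotient by is trunk reversal---no additional ``palindromic'' configurations arise, because distinct trunk vertices necessarily carry disjoint nonempty sets of leaf pairs. Once this species picture is in place, the remaining algebraic simplification is a routine computation.
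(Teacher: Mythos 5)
Your proposal is correct and follows essentially the same route as the paper: decompose a regular symbic tree into blocks of leaf pairs arranged along the trunk, each block carrying a one-vertex-trunk symbic tree, and apply the EGF composition formula to get $E = E_2 \circ E_1$. The paper states this more tersely (citing Stanley's composition theorem and leaving the algebra as "an easy manipulation"), while you spell out the substitution $y = \sqrt{1-4x+2x^2}$ and the verification that no extra symmetry beyond trunk reversal needs to be quotiented out, but the underlying argument is identical.
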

    The first few terms of the sequence are $a_0 = 1, a_1 = 1, a_2 = 2, a_3 = 12, a_4 = 111, a_5 = 1395.$

\begin{proof}
    We see that the number of $n + n$ regular symbic trees is the number of ways to partition $n$ into blocks, compose each block into a symbic tree with a one-vertex trunk, and then arrange these blocks along the trunk.
    Thus the composition formula for exponential generating functions (Theorem 5.1.4, \cite{stanley2011enumerative}) promises us that $E(x)$ is the composition of $E_2(x)$ with $E_1(x)$, and an easy algebraic manipulation shows that $E_2(E_1(x))$ has the desired form. 
\end{proof}

\begin{remark}
    By Remark~\ref{rem:fanstructurerefinement}, this provides an upper bound for the number of maximal cones of the symmetric tropical rank 2 matrices with the coarser polyhedral structure coming from $3\times 3$ minors.  From Figure~\ref{fig:3x3fans} the tropical variety from the $3 \times 3$ has $f$-vector $(1, 6, 9)$, but there are $12$ combinatorial types of $3\times3$ regular symbic trees.
\end{remark}

\section{The algebraic matroid of rank \texorpdfstring{$2$}{2} symmetric matrices}

\label{sec:Algebraic Matroids}
The algebraic matroid of an irreducible variety $V \subset K^n$ over an algebraically closed field $K$ is the matroid on the ground set $\{1,\dots,n\}$ whose rank function is given by the dimension of the projection of $V$ onto coordinate subspaces.  Equivalently, a subset $S \subset \{1,\dots,n\}$ is independent in the algebraic matroid if and only if the projection of $V$ onto the $S$ coordinates is dominant.  We can analogously define  algebraic matroids of tropical varieties, and tropicalization  preserves  algebraic matroids~\cite{Yu_algebraicMatroids}.

For the variety of rank $2$ matrices over an algebraically closed field, Bernstein gave a combinatorial description of the algebraic matroid by analyzing  the tropical variety~\cite{bernstein2017completion}.  We recall some useful lemmas.
 
\begin{lemma}\cite[Lemma 2.4]{bernstein2017completion}\label{lem:independentSetLemma}
Let $V\subset K^n$ be an irreducible affine variety of dimension $d$. Then $S\subset[n]$ is independent in the algebraic matroid $M(V)$ of $V$ if and only if $S$ is independent in $M(\text{span}(\sigma))$ for some $d$-dimensional cone $\sigma$ in $\text{trop}(V)$.
\end{lemma}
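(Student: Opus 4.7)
The plan is to combine the cited fact that tropicalization preserves algebraic matroids~\cite{Yu_algebraicMatroids} with a purely polyhedral analysis of coordinate projections restricted to the top-dimensional cones of $\text{trop}(V)$. The strategy is to chain together three equivalences: tropical independence, projection dimensions of individual maximal cones, and projection dimensions of their $\RR$-spans.

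First I would rephrase algebraic independence on the tropical side: by the cited result, $S \subseteq [n]$ is independent in $M(V)$ if and only if the coordinate projection $\pi_S \colon \RR^n \to \RR^{|S|}$ sends $\text{trop}(V)$ to a set of dimension $|S|$. Since the algebraic matroid of $V$ has rank at most $d$, the case $|S| > d$ makes both sides of the biconditional vacuously false (no $d$-dimensional linear subspace $\text{span}(\sigma)$ can have $S$ independent either), so one may restrict to $|S| \le d$.

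Next I would invoke the Structure Theorem to write $\text{trop}(V) = \bigcup_\sigma \sigma$ as a finite union of $d$-dimensional maximal cones. Then $\pi_S(\text{trop}(V)) = \bigcup_\sigma \pi_S(\sigma)$, and this union has dimension $|S|$ if and only if some individual image $\pi_S(\sigma)$ does. The cone $\sigma$ contains a relatively open subset of its $\RR$-span $\text{span}(\sigma)$; since $\pi_S$ restricted to $\text{span}(\sigma)$ is a linear map, it sends relative interior to relative interior, yielding the key identity
\[
\dim \pi_S(\sigma) \;=\; \dim \pi_S(\text{span}(\sigma)).
\]
The final step is to recognize that $\dim \pi_S(\text{span}(\sigma)) = |S|$ is precisely the condition for $S$ to be independent in the linear matroid of $\text{span}(\sigma)$, which coincides with the algebraic matroid $M(\text{span}(\sigma))$ since the algebraic and linear matroids of any linear subspace agree. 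Chaining the equivalences gives: $S$ is independent in $M(V)$ iff there exists a maximal cone $\sigma$ of $\text{trop}(V)$ for which $S$ is independent in $M(\text{span}(\sigma))$.

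The step I expect to require the most care is the identity $\dim \pi_S(\sigma) = \dim \pi_S(\text{span}(\sigma))$, which relies on $\sigma$ being full-dimensional in its $\RR$-span together with the openness of linear maps onto their image; this is where the dimension count on a cone gets transferred to one on a linear space, enabling the reduction from tropical to linear matroids. If one instead wished to bypass the cited matroid-preservation theorem, the larger obstacle would be justifying directly that coordinate projections commute with tropicalization up to closure, which could be supplied by the tropical fundamental theorem applied to torus morphisms.
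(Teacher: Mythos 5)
Your argument is correct, but note that the paper offers no proof of this lemma at all --- it is imported verbatim as \cite[Lemma 2.4]{bernstein2017completion} --- so the only meaningful comparison is with Bernstein's original argument, which your proposal essentially reconstructs: invoke the matroid-preservation theorem of \cite{Yu_algebraicMatroids}, reduce $\dim \pi_S(\text{trop}(V))$ to the maximum of $\dim \pi_S(\sigma)$ over the finitely many maximal cones, and pass from each cone to its span using that a linear map carries a full-dimensional subset of $\text{span}(\sigma)$ onto a full-dimensional subset of $\pi_S(\text{span}(\sigma))$. The steps you flag as delicate (the identity $\dim \pi_S(\sigma) = \dim \pi_S(\text{span}(\sigma))$ and the compatibility of tropicalization with coordinate projections) are exactly the ones the cited references handle, so no gap remains.
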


\begin{lemma}\cite[Lemma 2.5]{bernstein2017completion}\label{lem:basisTransitionLemma}
 Let $V\subset K^n$ be an irreducible $d$-dimensional variety . Let $\tau$ be a $(d-1)$-dimensional cone of $\text{trop}(V)$ and let $\sigma_1,\dots, \sigma_k$ be the $d$-dimensional cones in $\text{trop}(V)$ containing $\tau$. If $B\subset[n]$ is a basis of $M(\text{span}(\sigma_1))$ then $B$ is also a basis of $M(\text{span}(\sigma_i))$ for some $i\ne 1$.
\end{lemma}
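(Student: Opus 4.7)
The plan is to argue by contradiction using the balancing condition on $\text{trop}(V)$. Write $W_i := \text{span}(\sigma_i)$ and recall that $B \subseteq [n]$ is a basis of $M(W_i)$ if and only if the coordinate projection $\pi_B : \RR^n \to \RR^B$ restricts to an isomorphism $W_i \xrightarrow{\sim} \RR^B$, equivalently $W_i \cap \RR^{[n]\setminus B} = \{0\}$. I would assume toward contradiction that $B$ is a basis of $M(W_1)$ but fails to be a basis of $M(W_i)$ for every $i \geq 2$.

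Since $\tau$ is a codimension-one face of each $\sigma_i$, write $W_i = \text{span}(\tau) + \RR u_i$, where $u_i$ is a primitive integer vector in $W_i$ pointing into $\sigma_i$ modulo $\text{span}(\tau)$. Because $\text{span}(\tau) \subset W_1$ and $\pi_B|_{W_1}$ is an isomorphism, the restriction $\pi_B|_{\text{span}(\tau)}$ is injective with $(d-1)$-dimensional image. For each $i \geq 2$, the kernel of $\pi_B|_{W_i}$ is nontrivial, so $\dim \pi_B(W_i) \leq d-1$; combined with $\pi_B(W_i) \supseteq \pi_B(\text{span}(\tau))$, this forces $\pi_B(W_i) = \pi_B(\text{span}(\tau))$, and in particular $u_i \in \text{span}(\tau) + \RR^{[n]\setminus B}$ for every $i \geq 2$.

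The key step is to invoke balancing. By the structure theorem cited in Section~\ref{sec:symbic}, $\text{trop}(V)$ is a balanced pure-$d$-dimensional rational polyhedral complex, so there exist positive integer weights $m_1, \ldots, m_k$ with $\sum_{i=1}^k m_i u_i \in \text{span}(\tau)$. Solving for $m_1 u_1$ and substituting the inclusion above for each $i \geq 2$ yields $m_1 u_1 \in \text{span}(\tau) + \RR^{[n]\setminus B}$. Since $m_1 > 0$, this gives $u_1 \in \text{span}(\tau) + \RR^{[n]\setminus B}$, hence $\pi_B(W_1) = \pi_B(\text{span}(\tau))$ has dimension $d-1$, contradicting the assumption that $\pi_B|_{W_1}$ is an isomorphism.

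The heart of the argument is translating the matroid-theoretic failure $W_i \cap \RR^{[n]\setminus B} \neq \{0\}$ into the geometric statement $u_i \in \text{span}(\tau) + \RR^{[n]\setminus B}$. Once this translation is in place, the balancing relation propagates the condition from the $u_i$ with $i \geq 2$ to $u_1$ and delivers the contradiction almost automatically. The main subtlety to keep an eye on is that the argument relies essentially on \emph{positivity} of the weights $m_i$, which is precisely what distinguishes a balanced fan from one that is merely rational; without positivity, the $m_i u_i$ for $i \geq 2$ could conceivably cancel out rather than sum to an element of the coset encoding $u_1$.
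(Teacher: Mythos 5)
Your proof is correct. The paper itself does not prove this lemma---it is quoted from Bernstein---but your argument is essentially the one in the cited source: translate the failure of $B$ to be a basis of $M(\text{span}(\sigma_i))$ into $u_i \in \text{span}(\tau) + \RR^{[n]\setminus B}$, then use the balancing relation $\sum_i m_i u_i \in \text{span}(\tau)$ to force $u_1$ into the same subspace, contradicting that $\pi_B$ is injective on $\text{span}(\sigma_1)$. One small quibble with your closing remark: the argument does not really rely on positivity of all the weights, only on $m_1 \neq 0$, since $\text{span}(\tau) + \RR^{[n]\setminus B}$ is a linear subspace and membership of $m_i u_i$ in it for $i \geq 2$ is indifferent to the signs of the $m_i$; even if $\sum_{i\geq 2} m_i u_i$ cancelled completely, you would conclude $u_1 \in \text{span}(\tau)$, which is again a contradiction because $\sigma_1$ is $d$-dimensional.
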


Recall that a caterpillar tree is a tree with exactly two cherries, where a cherry is a pair of adjacent leaves.  
Using the second lemma above, Bernstein showed that it suffices to consider caterpillar trees only, that is, every basis of the algebraic matroid of the variety of rank 2 matrices appears as a basis for the cone corresponding to a caterpillar tree.  

With similar arguments, we show that for rank 2 symmetric matrices it suffices to consider only the symbic trees in which each branch is a caterpillar, meaning that it contains only one cherry.  
By algebraic matroid of a symbic tree, we mean the algebraic matroid of the linear span of the cone corresponding to the symbic tree.  

\begin{prop}\label{prop:caterpillar-basis}
The collection of bases in the algebraic matroid of the variety of rank two symmetric matrices is the union of the collections of bases of algebraic matroids of regular symbic trees.  Moreover, it suffices to take the union over only the symbic trees with caterpillar branches.
\end{prop}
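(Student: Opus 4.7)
The plan is to apply Bernstein's two lemmas (Lemmas~\ref{lem:independentSetLemma} and~\ref{lem:basisTransitionLemma}) to $V$, the variety of $n \times n$ symmetric rank two matrices. For the first assertion, by Theorem~\ref{thm:symbic} and Corollary~\ref{cor:fanstructurerefinement} the regular symbic tree cones $C_T$ refine the top-dimensional cones of $\text{trop}(V)$; since subdivision does not change the linear span of a full-dimensional cone, $\text{span}(C_T)$ equals $\text{span}(\sigma)$ for the unique maximal cone $\sigma$ of $\text{trop}(V)$ containing $C_T$, so their algebraic matroids coincide. Applying Lemma~\ref{lem:independentSetLemma} then gives the first claim.

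For the second assertion, given a basis $B$ of the algebraic matroid of $V$ coming from some regular symbic tree $T$ whose branches are not all caterpillars, I would iteratively modify $T$ into a symbic tree $T^\ast$ all of whose branches are caterpillars, while preserving $B$ as a basis at each step. Each step is a symbic transition: if a branch of the current tree has an internal vertex $v$ neither of whose children is a leaf, contract a suitable internal edge at $v$ and extend an adjacent edge, mirroring the operation on the color-swapped branch. A careful choice of transition edge strictly decreases the total number of internal branch vertices without a leaf child, so iteration terminates after finitely many steps. Preservation of $B$ is argued via Lemma~\ref{lem:basisTransitionLemma}: when the transition stays inside a single maximal cone of $\text{trop}(V)$ (which can occur because the symbic fan strictly refines the fan of $\text{trop}(V)$) the spans are identical and there is nothing to prove; when the transition crosses a codimension-one wall of $\text{trop}(V)$, the lemma guarantees that $B$ remains a basis for at least one neighboring maximal cone.

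The main obstacle is the last point: one must choose the transition direction so that the codimension-one wall crossed is shared with a maximal cone corresponding to a simpler symbic tree in the sense of the branch-complexity measure, so that iteration actually drives $T$ toward a caterpillar-branch tree rather than wandering among trees of equal complexity. This is analogous to the nearest-neighbor-interchange analysis in~\cite{bernstein2017completion}, with the added wrinkle that the color-swapping involution forces transitions to occur in mirrored pairs and thereby constrains which neighboring maximal cones are symbic cones at all. I expect this to reduce to a local analysis at the trunk vertex incident to the contracted edge, distinguishing whether the contracted edge lies on the trunk or entirely within a branch, and in each case exhibiting a mirrored transition that both decreases the branch-complexity and, via Lemma~\ref{lem:basisTransitionLemma}, preserves $B$ as a basis of the adjacent symbic cone. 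Iterating yields the desired $T^\ast$ with caterpillar branches and $B$ a basis of $M(\text{span}(C_{T^\ast}))$.
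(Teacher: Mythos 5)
Your first paragraph is correct and matches the paper: the first assertion is exactly Lemma~\ref{lem:independentSetLemma} combined with Theorem~\ref{thm:symbic}, and your extra observation that refining a full-dimensional cone does not change its linear span is a reasonable way to make the reduction to regular symbic cones explicit.

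The second part has a genuine gap, which you yourself flag as the ``main obstacle'': you never pin down which edge to contract, and---more importantly---you treat Lemma~\ref{lem:basisTransitionLemma} as if it lets you steer the basis into a neighboring cone of your choosing. It does not: the lemma only guarantees that $B$ survives to \emph{some} other maximal cone containing the wall $\tau$. Since contracting an internal edge produces a degree-four vertex with three resolutions, you must verify that \emph{both} resolutions other than the one you started from are strictly simpler under your complexity measure; otherwise the basis may land in a tree that is no simpler, and the iteration need not terminate. Your proposed measure (the number of internal branch vertices without a leaf child) does not obviously decrease in both alternative resolutions for an arbitrary ``suitable'' edge. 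The paper closes exactly this gap with a sharper choice: fix one cherry on the branch, take as monovariant the length of the path from the trunk to that cherry, and contract an internal edge of the branch that is \emph{not} on that path (such an edge exists precisely when the branch is not a caterpillar). Then, as in Figure~\ref{fig:branch-transition}, both alternative resolutions of the degree-four vertex insert a new vertex onto the trunk-to-cherry path, so both strictly lengthen it, and Lemma~\ref{lem:basisTransitionLemma} applies regardless of which neighbor the basis lands in. Until you exhibit a transition with this ``both neighbors improve'' property, your induction does not go through.
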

\begin{proof}
The first statement follows from Lemma~\ref{lem:independentSetLemma} and Theorem~\ref{thm:symbic}.  For the second statement, consider a branch of the symbic tree and fix a cherry.  If the branch is not caterpillar, there is an internal edge on that branch that is not on the path between the trunk and the cherry.  In the left most figure in Figure~\ref{fig:branch-transition} the the fixed cherry is in the subtree $A$ and the red thick edge is not on the path from the trunk to the cherry. Contracting that edge gives a codimension one cone in the tropical variety, and both of the other maximal cones containing the codimension one cone have symbic trees with longer path from the trunk to the cherry, as shown on the figures on right.  By Lemma~\ref{lem:basisTransitionLemma}, every basis of the symbic tree on the left is a basis of one of the symbic trees on the right, so we no longer need to consider the tree on the left to find every basis of the whole matroid.  By repeating the process, we see that it suffices to consider only the symbic trees where all the interal edges on a branch are along the path from the trunk to the fixed cherry, that is, the branch is caterpillar.
\end{proof}

\begin{figure}
    \centering    \includegraphics[width=0.9\linewidth]{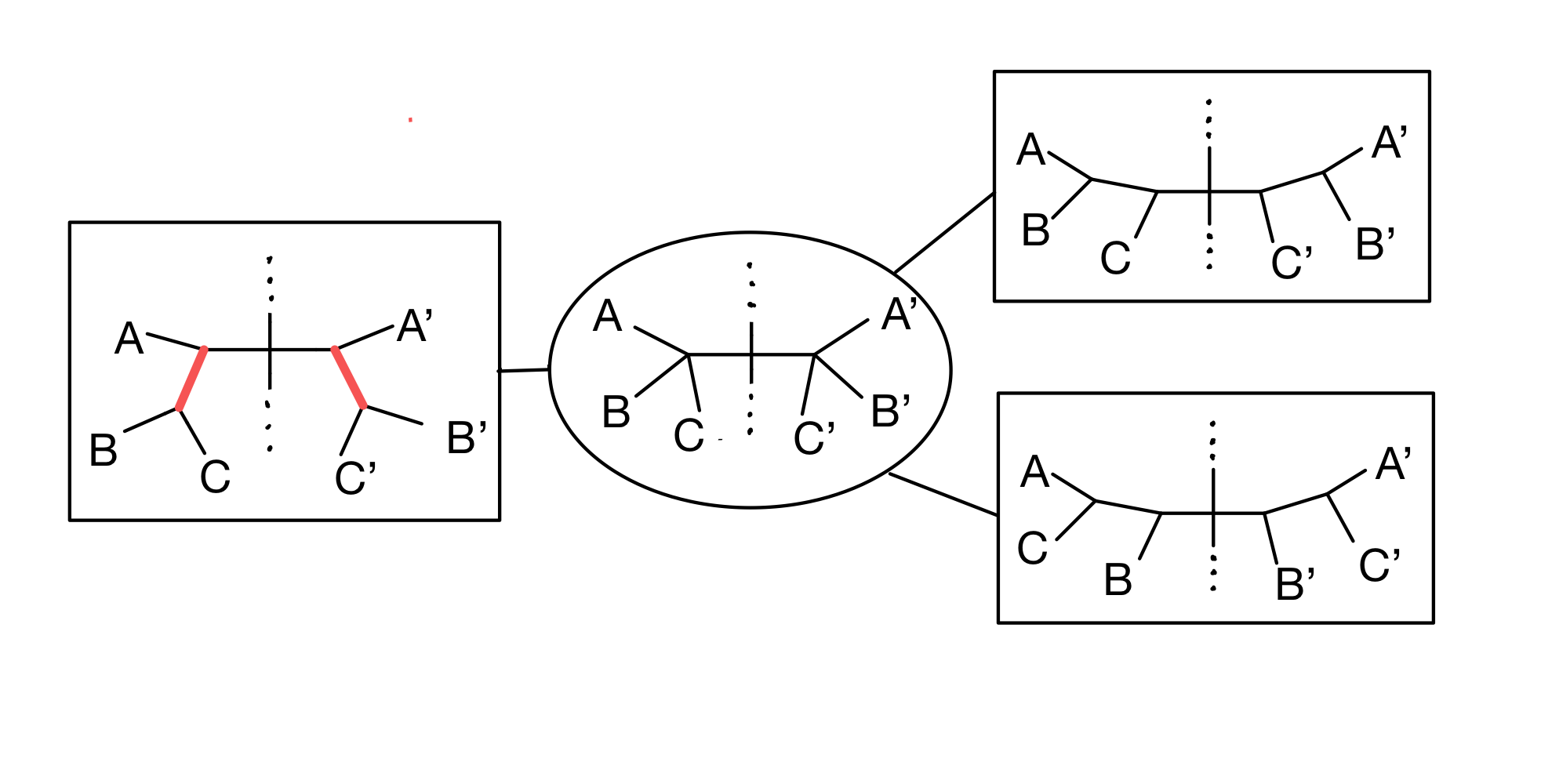}
    \caption{This picture shows that only caterpillar branches are needed for the algebraic matroid.  See  Proposition~\ref{prop:caterpillar-basis}. Here $A$, $B$, and $C$ are bicolored subtrees, not leaves.}
    \label{fig:branch-transition}
\end{figure}

We pose the following conjecture, similar to the result established in \cite[Lemma 3.3]{bernstein2017completion} for nonsymmetric matrices.
\begin{conj}
    Let $\mathcal{B}\subset E$ be a set of size $2n-1$. Then, $\mathcal{B}$ is a basis of a matroid of some symbic tree if and only if $\mathcal{B}$ is a basis of a matroid of some caterpillar symbic tree.
\end{conj}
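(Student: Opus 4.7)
My plan is to induct on the number of trunk edges of a symbic tree, extending the strategy used in the proof of Proposition~\ref{prop:caterpillar-basis}.  By that proposition, I may assume $\mathcal{B}$ is a basis of the matroid of some symbic tree $T$ with caterpillar branches.  The key observation is that such a $T$ is itself a caterpillar symbic tree precisely when its trunk consists of a single vertex: the single trunk vertex has degree $2$ with two color-swapped caterpillar branches attached, and so the internal vertices form a single path passing through the trunk.  Thus it suffices to reduce the number of trunk edges to zero while preserving $\mathcal{B}$ as a basis.

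Let $\mu(T)$ denote the number of trunk edges of $T$, and induct on $\mu(T)$.  The base case $\mu(T)=0$ is a caterpillar symbic tree.  For the inductive step with $\mu(T)\ge 1$, select a trunk edge $e$ of $T$, contract it to obtain a codimension-one face $\tau$ of the cone $C_T$, and invoke Lemma~\ref{lem:basisTransitionLemma} to find another symbic tree $T'\neq T$ whose cone contains $\tau$ and whose matroid still has $\mathcal{B}$ as a basis.  The central analysis is to understand which re-expansions of the contracted vertex are compatible with the color-swap involution.  There are two types: (i) the two new vertices are both fixed by the involution, producing a new trunk edge, or (ii) they are swapped by the involution, with the midpoint of the new edge becoming the sole trunk vertex on that segment.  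In case (ii), after re-applying Proposition~\ref{prop:caterpillar-basis} to restore caterpillar branches (which only modifies branches and leaves the trunk intact), one obtains a symbic tree with strictly fewer trunk edges than $T$.

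The main obstacle is ensuring that Lemma~\ref{lem:basisTransitionLemma} yields a $\mu$-reducing re-expansion.  When $\mu(T)=1$, the two non-original re-expansions both fall into case (ii) and collapse the trunk to a single vertex, so the induction proceeds cleanly.  When $\mu(T)\ge 2$, case (ii) would disconnect the fixed set of the involution and is therefore disallowed; only case~(i) re-expansions are valid, and these merely permute branch assignments along the trunk without reducing $\mu$.  To handle this, one could refine the complexity measure (for instance, using a lexicographic ordering on the sequence of branch-sizes along the trunk) and argue that iterating the permuting transitions eventually enables a $\mu$-reducing step, or else seek a purely combinatorial characterization of bases of the algebraic matroid analogous to Bernstein's Theorem~1.1 that directly certifies realization by caterpillar symbic trees.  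Executing this finer analysis---identifying a sequence of basis-preserving transitions that guarantees eventual trunk collapse when $\mu\ge 2$---is the most delicate step of the proof.
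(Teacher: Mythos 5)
This statement is posed in the paper as a \emph{conjecture}, not a theorem: the authors give no proof, and they explicitly record (see the discussion around Figure~\ref{fig:trunk-transition}) that the natural strategy of contracting a trunk edge and transitioning to a symbic tree with a shorter trunk ``is not always possible.'' Your proposal runs head-on into exactly that obstruction and does not resolve it, so it is not a proof. Concretely: when the trunk has $\mu(T)\ge 2$ edges, contracting a trunk edge $e$ produces a codimension-one cone that is contained in precisely the maximal cones obtained by re-expanding the merged vertex in a way compatible with the involution. As you correctly note, the ``case (ii)'' re-expansions (new vertices swapped by the involution) are forbidden because they would disconnect the fixed-point set, so the only available neighbors are ``case (i)'' trees obtained by transposing the two adjacent branch-pairs along the trunk. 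Lemma~\ref{lem:basisTransitionLemma} therefore only ever moves $\mathcal{B}$ between trees with the \emph{same} trunk length, and no iteration of such moves can reduce $\mu$. Your suggested fix --- a lexicographic ordering on the sequence of branch sizes along the trunk --- does not supply the missing mechanism: permuting branches preserves the multiset of branch sizes and never creates a $\mu$-reducing transition, and you do not exhibit any other basis-preserving move that shortens the trunk. You acknowledge this yourself by deferring ``the most delicate step,'' but that step is the entire content of the conjecture.

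Two smaller points. First, your reduction to trees with caterpillar branches via Proposition~\ref{prop:caterpillar-basis} and your analysis of the $\mu(T)=1$ case (where both nontrivial re-expansions of the degree-four merged vertex do collapse the trunk to a point) are correct and match the partial progress the paper itself records. Second, if you want to pursue this, note that the needed statement is strictly stronger than what Lemma~\ref{lem:basisTransitionLemma} gives: you would need to show directly that a basis of the distance-parameter matroid of $T$ remains a basis after a specific trunk-shortening modification that is \emph{not} realized as a transition through a codimension-one face of the fan, for instance by an explicit rank computation on the distance parameter matrices, or by finding a combinatorial characterization of bases in the spirit of Bernstein's result for the nonsymmetric case.
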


 To prove this conjecture, we would wish to be able contract an edge on the trunk and transition to other symbic trees with shorter trunks, but it is not always possible, as seen in Figure~\ref{fig:trunk-transition}.

 \begin{figure}
     \centering    \includegraphics[width=0.7\linewidth]{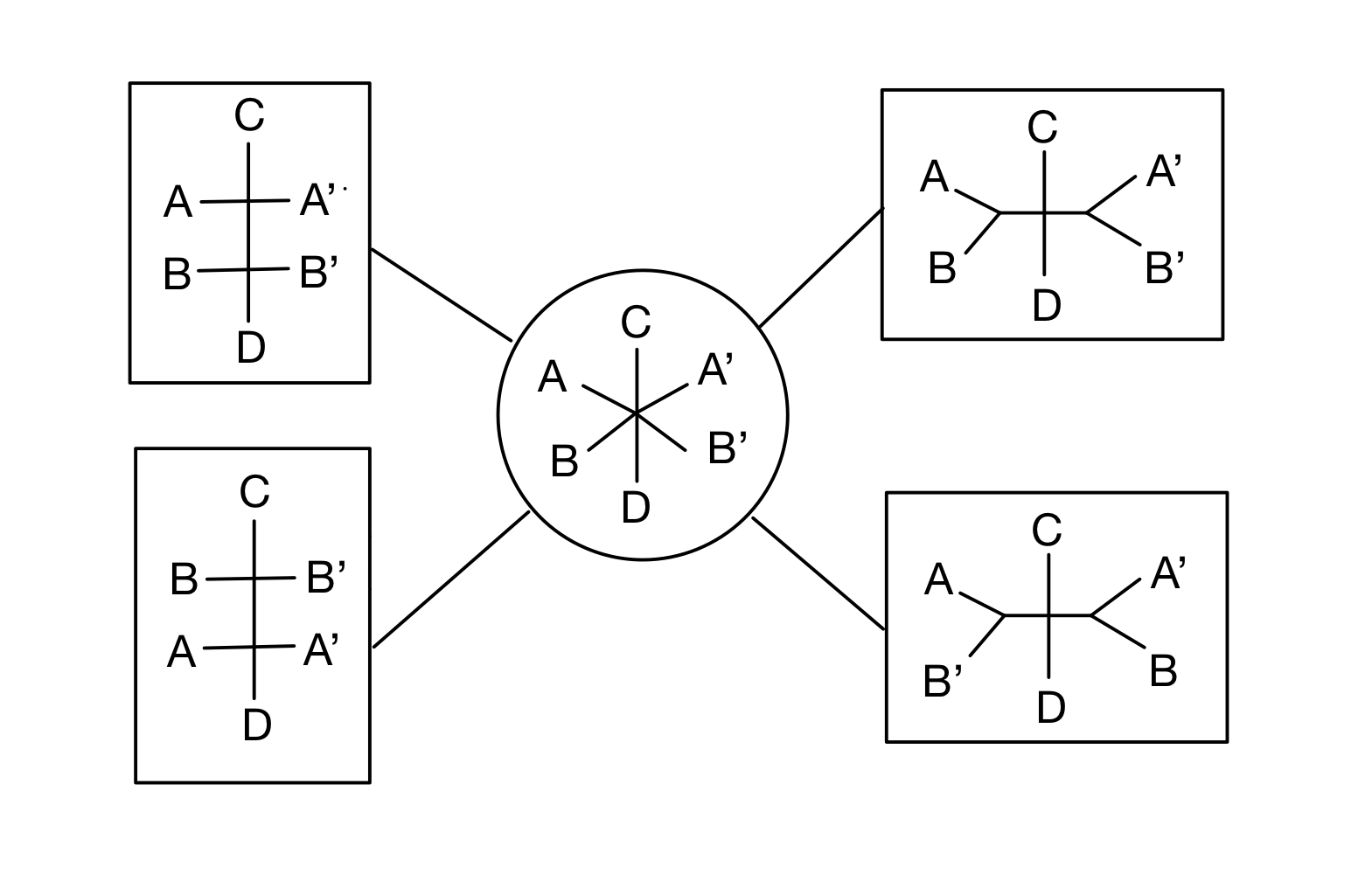}
     \caption{Starting with one of the symbic trees on the left, shrinking a trunk edge does not always allow a transition to symbic trees with shorter trunks. Here $A,B,C,$ and $D$ are bicolored subtrees, not just leaves.}
     \label{fig:trunk-transition}
 \end{figure}

Now we will see that the matroid associated with a symbic tree can be understood as a Cayley embedding of disjoint subsets of $\{\mathbf{e}_i + \mathbf{e}_j \mid i,j \in [n]\}$.
Recall that the Cayley embedding of point configurations $P_0,P_1,\dots,P_r \subset \ZZ^n$ is the following configuration in $\ZZ^n \times \ZZ^{r}$: \[(P_0 \times \{0\}) \cup (P_1 \times \{\mathbf{e}_1\}) \cup \cdots \cup (P_r \times \{\mathbf{e}_r\}).
\]
See \cite[Section 9.2]{de2010triangulations} for further details. 

For a symbic tree $T$, 
the cone of symmetric tropical rank $2$ matrices corresponding to $T$ is parameterized by the matrix $A_T$ described in the paragraph above Proposition~\ref{prop:generators}.  Now, instead of labeling each edge with  a parameter, we can re-parameterize by associating each node with a parameter and labeling each edge with the difference of the parameters for the end nodes, as in Figures~\ref{fig:relabel4x4} and~\ref{fig:relabelbig}.  The chosen fixed vertex $O$ is considered zero.  
It follows that the matroid of (the linear span of) the cone is given by the linear independence among columns of matrix with a Cayley embedding structure.  See Example~\ref{ex:Cayley}.

\begin{figure}
    \centering
    \begin{tikzpicture}[line cap=round,line join=round,x=1cm,y=1cm, scale=0.5]

\draw [line width=1pt,color=blue] (15.5,3)-- (14.5,4);
\draw [line width=1pt,color=blue] (10.5,3)-- (11.5,4);
\draw [line width=1pt,color=blue] (15.5,.5)-- (14.5,-.5);
\draw [line width=1pt,color=red] (10.5,.5)-- (9.5,-.5);
\draw [line width=1pt,color=red] (10.5,3)-- (9.5,4);
\draw [line width=1pt,color=red] (15.5,3)-- (16.5,4);
\draw [line width=1pt,color=blue] (10.5,.5)-- (11.5,-.5);
\draw [line width=1pt,color=red] (15.5,.5)-- (16.5,-.5);

\draw [line width=1pt] (11.5,1.5)-- (13,1);
\draw [line width=1pt] (14.5,1.5)-- (13,1);
\draw [line width=1pt] (14.5,1.5)-- (15.5,3);
\draw [line width=1pt] (11.5,1.5)-- (10.5,3);
\draw [line width=1pt] (14.5,1.5)-- (15.5,.5);
\draw [line width=1pt] (11.5,1.5)-- (10.5,0.5);

\draw (12.3,1.7) node {$a$};
\draw (9.9,2.3) node {$b-a$};
\draw (9.9,1.1) node {$c-a$};

\draw[color=blue] (14.5,4.5) node {$1'$};
\draw[color=blue] (11.5,4.5) node {$2'$};
\draw[color=blue] (14.2,-.9) node {$3'$};
\draw[color=red] (16.6,-.9) node {$4$};
\draw[color=red] (9.5,4.5) node {$1$};
\draw[color=red] (16.5,4.5) node {$2$};
\draw[color=blue] (11.7,-.9) node {$4'$};
\draw[color=red] (9.3,-.9) node {$3$};

\draw (13,1) node {$\boldsymbol{\bullet}$};
\draw (13,.4) node {$O$};

\draw (24,1.5) node {$A=\begin{bmatrix}
0 & b& 0 & a \\
b & 0 & a & 0 \\
0 & a & 0 & c \\
a & 0 & c & 0
\end{bmatrix}$};

\end{tikzpicture}
      \caption{A symbic tree of $4$ leaves with its corresponding symmetric tropical rank $2$ matrix where $a \geq 0, b-a \geq 0$.}
    \label{fig:relabel4x4}
\end{figure}
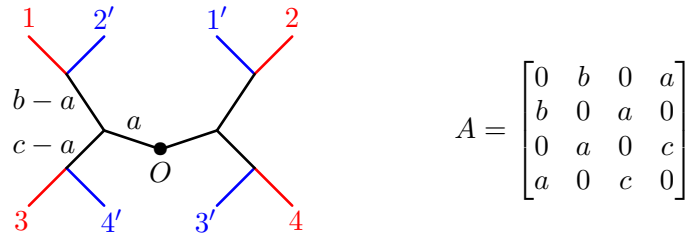

\pagebreak
\begin{example}
\label{ex:Cayley}
    The matroid associated with the tree in Figure \ref{fig:relabel4x4} is given by the following Cayley matrix. The parameters $d,e,f,g$ come from the lineality space as in Section~\ref{sec:Cones}.
\[  
\begin{blockarray}{ccccccccccc}
& 14' & 23' & 12' & 34' & 11' & 13' & 22' & 24' & 33' & 44' \\
\begin{block}{c[cc|c|c|cccccc]}
 a&1 & 1 & 0 & 0 & 0 & 0 & 0 & 0 & 0 & 0 \\
 b&0 & 0 & 1 & 0 & 0 & 0 & 0 & 0 & 0 & 0 \\
 c&0 & 0 & 0 & 1 & 0 & 0 & 0 & 0 & 0 & 0 \\
 \cmidrule(lr){2-3} \cmidrule(lr){4-4}  \cmidrule(lr){5-5}  \cmidrule(lr){6-11}
 d&1 & 0 & 1 & 0 & 2 & 1 & 0 & 0 & 0 & 0 \\
e& 0 & 1 & 1 & 0 & 0 & 0 & 2 & 1 & 0 & 0 \\
 f&0 & 1 & 0 & 1 & 0 & 1 & 0 & 0 & 2 & 0 \\
 g&1 & 0 & 0 & 1 & 0 & 0 & 0 & 1 & 0 & 2 \\
\end{block}
\end{blockarray}\]

\end{example}

\begin{figure}
    \centering
    \includegraphics[width=0.5\linewidth]{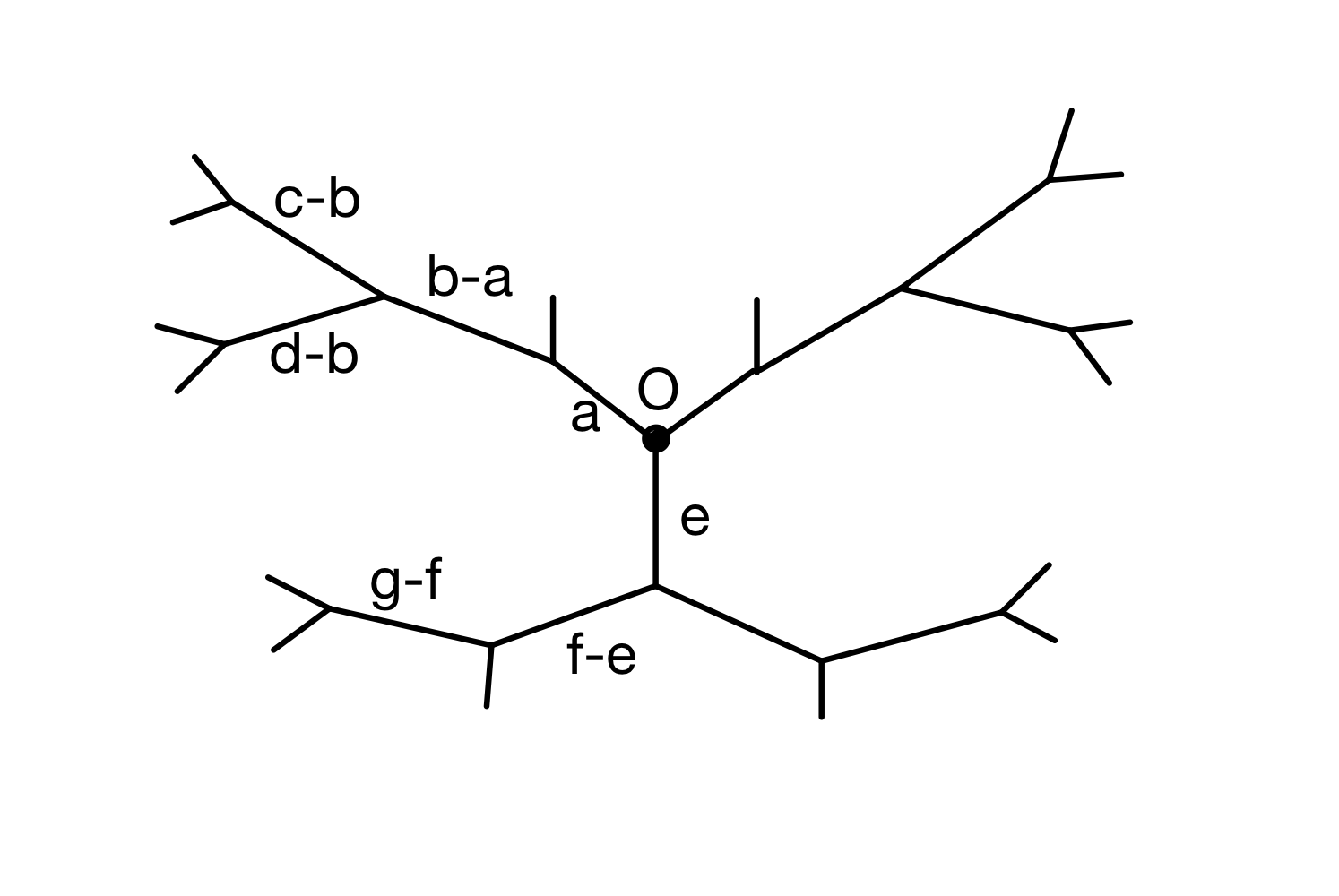}
    \caption{We can label the edges of a symbic tree such that the path distance from the chosen fixed vertex $O$ to each node is represented by a unique parameter.}
    \label{fig:relabelbig}
\end{figure}

The discussion so far is summarized in the following proposition: 

\begin{proposition}
    For an $n + n$ symbic tree $T$, its associated matroid is given by the affine independences in a Cayley sum of point configurations corresponding to nodes in $T$.  For each node $v$, associate the points $\mathbf{e}_i+\mathbf{e}_j$ such that the path from $O$ to $i$ and the path from $O$ to $j'$ diverge at the node $v$.
\end{proposition}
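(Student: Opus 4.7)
The plan is to translate the linear-algebraic content of the proposition into an explicit computation with the parametrization of $C_T$ introduced in Section~\ref{sec:Cones}. Using the re-parametrization of $A_T$ illustrated in Figures~\ref{fig:relabel4x4} and~\ref{fig:relabelbig}, I would first assign to each internal node $v$ of $T$ a parameter $t_v$ equal to the distance from the chosen fixed point $O$ to $v$, with $t_O := 0$. From the definition of $A_T$ given just before Proposition~\ref{prop:generators}, its $(i,j)$ entry is then exactly $t_{v_{ij}}$, where $v_{ij}$ is the node at which the paths from $O$ to $i$ and from $O$ to $j'$ diverge. Combining with the lineality generators $X \odot X^\top$ from Lemma~\ref{lem:lineality}, a generic element of $C_T$ plus its lineality space is given by the linear forms
\[
A_{ij}(t, X) \;=\; t_{v_{ij}} + X_i + X_j, \qquad 1 \le i \le j \le n,
\]
in the parameters $\{t_v\}_{v \ne O}$ and $\{X_1,\dots,X_n\}$.

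Next, because the linear span of $C_T$ is itself a linear space, its algebraic matroid coincides with the linear matroid of the columns of any linear parametrization. Reading off coefficients of $A_{ij}(t,X)$, the column indexed by the ground-set element $(i,j)$ is
\[
c_{ij} \;=\; \bigl(\mathbf{e}_{v_{ij}},\; \mathbf{e}_i + \mathbf{e}_j\bigr) \;\in\; \RR^R \oplus \RR^n,
\]
with the convention $\mathbf{e}_O := 0$ and $R$ the number of internal nodes other than $O$. I would then match this collection of columns to the Cayley sum of the statement: setting $P_v := \{\mathbf{e}_i + \mathbf{e}_j : v_{ij} = v\} \subset \RR^n$ for each internal node $v$, placing $P_O$ at second coordinate $0$ and each other $P_v$ at second coordinate $\mathbf{e}_v$, the resulting Cayley sum $\bigcup_v (P_v \times \{\mathbf{e}_v\})$ is tautologically equal to $\{c_{ij}\}$.

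Finally, the matroid on $\{c_{ij}\}$ that is intrinsic to the parametrization is a priori the linear matroid, while the statement asserts the affine matroid, so the last step is to reconcile the two. Since each $\mathbf{e}_i + \mathbf{e}_j$ (including $2\mathbf{e}_i$ on the diagonal) has coordinate sum $2$, all columns $c_{ij}$ lie on the affine hyperplane $H = \{(y,x) : \sum_k x_k = 2\} \subset \RR^R \oplus \RR^n$, which does not contain the origin. Applying the linear functional $(y,x) \mapsto \tfrac12 \sum_k x_k$ to any linear dependence among the $c_{ij}$ forces the dependence coefficients to sum to zero, so linear and affine dependences of the $c_{ij}$ coincide. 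This identifies the linear matroid of the columns with the affine matroid of the Cayley sum, as claimed.

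The main obstacle I anticipate is conceptual rather than technical: recognising and formalising the last identification, namely that because all $c_{ij}$ lie on a common affine hyperplane missing the origin, the algebraic matroid of $C_T$ is described affinely rather than linearly. Everything else is essentially unpacking definitions and formalising the re-labeling trick of Figures~\ref{fig:relabel4x4} and~\ref{fig:relabelbig}.
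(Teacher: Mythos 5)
Your proposal is correct and follows essentially the same route as the paper, which establishes this proposition via the node re-parametrization of $A_T$ (Figures~\ref{fig:relabel4x4} and~\ref{fig:relabelbig}) together with the lineality generators, reading off the Cayley structure from the columns of the resulting parameter matrix as in Example~\ref{ex:Cayley}. Your final step reconciling linear with affine independence (all columns lying on the hyperplane where the $\mathbf{e}_i+\mathbf{e}_j$ coordinates sum to $2$) is a correct detail that the paper leaves implicit.
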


It remains an open problem to find simple combinatorial descriptions for the bases and circuits of the algebraic matroid of the variety of symmetric tropical rank $2$ matrices.

\section*{Acknowledgements}
We thank Skye Binegar, Cvetelina Hill, and Bo Lin for their contributions and Daniel Irving Bernstein for helpful discussions. We also thank Andrew Reimer-Berg for his insight on the recurrence in Lemma \ref{lem:1-vertex-trunk}. JY and MC were partially supported by NSF-DMS grant \#1855726.  We thank the referee for detailed comments which helped improve
the exposition.
\bibliography{main}
\bibliographystyle{alpha}

\end{document}